\documentclass[11pt]{article}

\usepackage[T1]{fontenc}
\usepackage{lmodern}
\usepackage[margin=1in]{geometry}
\usepackage{microtype}
\usepackage{mathtools,amssymb,amsthm}
\usepackage{aliascnt}
\usepackage[hidelinks]{hyperref}
\usepackage[nameinlink,noabbrev]{cleveref}
\usepackage{xcolor}
\usepackage{authblk}

\allowdisplaybreaks
\numberwithin{equation}{section}

\newtheorem{theorem}{Theorem}[section]
\newaliascnt{lemma}{theorem}
\newtheorem{lemma}[lemma]{Lemma}
\aliascntresetthe{lemma}
\newaliascnt{proposition}{theorem}
\newtheorem{proposition}[proposition]{Proposition}
\aliascntresetthe{proposition}
\newaliascnt{corollary}{theorem}
\newtheorem{corollary}[corollary]{Corollary}
\aliascntresetthe{corollary}

\theoremstyle{definition}
\newaliascnt{definition}{theorem}
\newtheorem{definition}[definition]{Definition}
\aliascntresetthe{definition}

\theoremstyle{remark}
\newaliascnt{remark}{theorem}
\newtheorem{remark}[remark]{Remark}
\aliascntresetthe{remark}

\crefname{theorem}{theorem}{theorems}
\Crefname{theorem}{Theorem}{Theorems}
\crefname{lemma}{lemma}{lemmas}
\Crefname{lemma}{Lemma}{Lemmas}
\crefname{proposition}{proposition}{propositions}
\Crefname{proposition}{Proposition}{Propositions}
\crefname{corollary}{corollary}{corollaries}
\Crefname{corollary}{Corollary}{Corollaries}
\crefname{definition}{definition}{definitions}
\Crefname{definition}{Definition}{Definitions}
\crefname{remark}{remark}{remarks}
\Crefname{remark}{Remark}{Remarks}
\crefname{example}{example}{examples}
\Crefname{example}{Example}{Examples}

\newcommand{\Q}{\mathbb{Q}}
\newcommand{\R}{\mathbb{R}}
\newcommand{\C}{\mathbb{C}}
\newcommand{\Z}{\mathbb{Z}}
\newcommand{\lc}{\operatorname{lc}}
\newcommand{\disc}{\operatorname{disc}}

\title{Four-square polynomial triples:
quadratic classification and equal-degree obstructions}
\author[1]{Mirela Juki\'{c} Bokun}
\author[2]{Ana Jurasi\'{c}}

\affil[1]{School of Applied Mathematics and Informatics\\
  University of Osijek\\
Trg Ljudevita Gaja 6, HR-31000 Osijek, Croatia\\
\texttt{mirela@mathos.hr}}
\affil[2]{Faculty of Mathematics, University of Rijeka\\
Radmile Matej\v{c}i\'c 2, HR-51000 Rijeka, Croatia\\
\texttt{ajurasic@math.uniri.hr}}
\date{}

\hypersetup{
  pdftitle={Four-square polynomial triples:
quadratic classification and equal-degree obstructions},
  pdfauthor={Mirela Jukic Bokun and Ana Jurasic}
}

\begin{document}

\maketitle

\begin{abstract}
 We study triples of distinct nonconstant polynomials $a,b,c$ over
\(\mathbb{R}\), \(\mathbb{Q}\) or \(\mathbb{Z}\) for which
\[
 ab+1,\qquad ac+1,\qquad bc+1,\qquad abc+1
\]
are all polynomial squares. We show that, up to permutation and a common invertible affine change of variable, there is exactly one such triple of quadratic polynomials in \(\mathbb{R}[X]\), the one arising from a recent construction of Jurasić, and that none exists in \(\mathbb{Q}[X]\) or
\(\mathbb{Z}[X]\). Moreover, 
for arbitrary nonconstant entries over $\R$ or $\Q$, we prove that the degree pattern  \((2n,2m,2l)\) with
\(1\le n\le m\le l\)  satisfies either
\(l=m\) or \(l>n+m\). For the case $l=m$, we derive
factorization systems and reducibility restrictions. In the equal-degree case over \(\Q\), we further show that
every entry has at least two distinct irreducible factors. As an application, we show that in a natural regular
parametrized family of polynomial Diophantine triples in \(\Q[X]\) the fourth expression $abc+1$
is never a square when the parameter is constant or the first entry is a square.
\end{abstract}

\noindent\textbf{Keywords.} Diophantine $m$-tuples; polynomial Diophantine tuples; squares in polynomial rings.

\noindent\textbf{2020 Mathematics Subject Classification.} 11D09, 11C08.

\section{Introduction and statement of results}

A set of \(m\) positive integers
\(\{a_1,\ldots,a_m\}\) is called a \emph{Diophantine \(m\)-tuple}
if \(a_i a_j+1\) is a
perfect square whenever \(i\neq j\). 
 Diophantine tuples have been studied extensively.
For an overview of their history, principal constructions, and connections
with elliptic curves, see \cite{DujellaBook2024}.

An important role in the theory of Diophantine tuples is played by regular
extensions. A Diophantine triple \(\{a,b,c\}\), with
\(ab+1=r^2\), is called \emph{regular} if, after a suitable ordering and
choice of the sign of \(r\),
\begin{equation}\label{eq:regular}
 c=a+b+2r.
\end{equation}
Similarly, a Diophantine quadruple \(\{a,b,c,d\}\), with
$ab+1=r^2,$ $ac+1=s^2$ and $bc+1=t^2,$
is called \emph{regular} if, after a suitable ordering and choices of
signs,
\[
 d=a+b+c+2abc+2rst.
\]
For more details on regular Diophantine triples and quadruples, see
\cite{DujellaBook2024,Dujella2001}.

Polynomial analogues of the Diophantine-tuple problem have also been
studied. In this setting the entries are polynomials, at least one of them
nonconstant, and one asks that the product of every two distinct entries,
increased by \(1\), be a polynomial square. Early and foundational work
includes \cite{Jones1976,DujellaFuchsTichy2002,DujellaFuchs2003,
DujellaFuchs2004}. Equal-degree and quadratic cases were developed further in
\cite{DujellaFuchsWalsh2006,DujellaJurasic2010,DujellaJurasic2011,
Jurasic2011}. 

The problem considered in the present paper imposes an additional square
condition. Namely, Dujella and Szalay \cite{DujellaSzalay} investigated the existence of three distinct positive integers \(a,b,c>1\)
such that
\[
        ab+1,\qquad ac+1,\qquad bc+1,\qquad abc+1
\]
are all perfect squares and proved that infinitely many
such integer triples exist. Their result
motivates the study of the corresponding problem over polynomial rings.

\begin{definition}
Let \(K\) be a field of characteristic zero. A set
\(\{a,b,c\}\subset K[X]\) of distinct nonzero polynomials is called a
\emph{four-square polynomial triple} if there are \(r,s,t,u\in K[X]\)
such that
\begin{equation}\label{eq:four-squares}
        ab+1=r^2,\qquad
        ac+1=s^2,\qquad
        bc+1=t^2,\qquad
        abc+1=u^2.
\end{equation}
\end{definition}

Jurasić \cite{Jurasic2026} recently constructed an example with three
nonconstant quadratic entries from \(\mathbb R[X]\) and showed that composition
with a nonconstant polynomial gives examples in every positive even common
degree. The quadratic parameters are algebraic but not rational. This raises
the natural questions of whether all real quadratic examples arise from the
same construction and whether nonconstant examples exist over
\(\mathbb Q[X]\) or \(\mathbb Z[X]\). While  \cite{Jurasic2026} gives existence, our first main result shows that, up to permutation and a common invertible real affine change of variable, there is exactly one four-square polynomial triple in $\mathbb{R}[X]$ whose three entries
have degree two. For a triple defined over \(\mathbb Q[X]\), the
affine normalization is defined over \(\mathbb Q\), and the resulting
parameter \(k\in\mathbb Q\) must satisfy an irreducible cubic over
\(\mathbb Q\). Consequently, the degree pattern \((2,2,2)\) is impossible
over \(\mathbb Q[X]\), and hence also over \(\mathbb Z[X]\). 
 
We also establish several general restrictions. For a four-square
polynomial triple with nonconstant entries in \(\Q[X]\), all three
degrees are positive and even, and each leading coefficient is a
positive rational square. Over either \(\R\) or \(\Q\), we prove
that any degree pattern \((2n,2m,2l)\), with
\(1\le n\le m\le l\), must satisfy either \(l=m\) or
\(l>n+m\).  For the degree pattern \((2n,2m,2m)\), we derive factorization
systems and reducibility restrictions. In the equal-degree case
\((2n,2n,2n)\) over \(\Q\), we further show that every entry has
at least two distinct irreducible factors. Finally, we apply
these results to a regular parametrized family of polynomial
Diophantine triples in \(\Q[X]\).
The computer-assisted algebraic computations were carried out using
Wolfram Mathematica, Version 14.3 \cite{Mathematica}.

The paper is organized as follows. Section~\ref{sec2} fixes the notation and
establishes the elementary restrictions on degrees and leading
coefficients. Section~\ref{sec3} establishes the uniqueness of the real quadratic four-square polynomial triple  up to permutation and affine change of variable,
identifies it with Jurasi\'c's triple, and proves the nonexistence
of  quadratic triples over \(\mathbb Q[X]\). Section~\ref{sec:degree-gap} develops the general degree
restrictions, the factor-allocation system for $(2n,2m,2m)$, and
its equal-degree specialization. It concludes with the application
to a regular parametrized family.

\section{Elementary restrictions}\label{sec2}

Throughout the paper, let
\(\{a,b,c\}\subset K[X]\) be a four-square polynomial triple whose entries are
nonconstant. This excludes constant four-square triples and the degenerate triples $\{a,b,c\}$ which contains $1$, analogously as it is
discussed in \cite{Jurasic2026}. Let
\(r,s,t,u\in K[X]\) be square roots chosen as in
\eqref{eq:four-squares}, and let \(\lc\) denote the leading   coefficient of a polynomial.
Polynomial greatest common divisors are taken to be monic.

The parity and positivity assertions over \(\R\) were observed
in \cite[Lemma~2.2]{Jurasic2026}. We record the following form,
which also identifies the leading coefficients as squares in
the coefficient field.

\begin{lemma}
\label{lem:degrees}
Let \(K\in\{\Q,\R\}\). If all three polynomials \(a,b,c\) in \eqref{eq:four-squares} are nonconstant, then their degrees are even and
their leading coefficients are nonzero squares in \(K\). In particular,
their leading coefficients are positive.

\end{lemma}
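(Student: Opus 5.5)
Since $a,b,c$ are nonconstant, each of the products $ab$, $ac$, $bc$, $abc$ is nonconstant. Adding $1$ therefore changes neither its degree nor its leading coefficient. So each of the four equations in \eqref{eq:four-squares} says that a nonconstant polynomial, apart from its constant term, is a square in $K[X]$. Comparing degrees and leading coefficients gives the four relations
\begin{align*}
\deg a+\deg b&=2\deg r, & \lc(a)\lc(b)&=\lc(r)^2,\\
\deg a+\deg c&=2\deg s, & \lc(a)\lc(c)&=\lc(s)^2,\\
\deg b+\deg c&=2\deg t, & \lc(b)\lc(c)&=\lc(t)^2,\\
\deg a+\deg b+\deg c&=2\deg u, & \lc(a)\lc(b)\lc(c)&=\lc(u)^2.
\end{align*}
Here $r,s,t,u$ are nonconstant and all leading coefficients lie in $K^\times$.

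For the degrees, write $\alpha=\deg a$, $\beta=\deg b$, $\gamma=\deg c$. The first three relations say that $\alpha+\beta$, $\alpha+\gamma$ and $\beta+\gamma$ are all even. Hence $\alpha,\beta,\gamma$ have a common parity, and $\alpha+\beta+\gamma\equiv 3\alpha \equiv \alpha \pmod 2$. The fourth relation forces $\alpha+\beta+\gamma$ to be even, so $\alpha$ is even, and then so are $\beta$ and $\gamma$.

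For the leading coefficients, I work in the group $K^\times/(K^\times)^2$. Write $A,B,C$ for the classes of $\lc(a),\lc(b),\lc(c)$. Because this group has exponent $2$, the relations give $AB=AC=BC=ABC=1$. From $AB=1$ and $ABC=1$ we get $C=1$. By symmetry, $BC=1$ and $ABC=1$ give $A=1$, and $AC=1$ and $ABC=1$ give $B=1$. So each leading coefficient is a nonzero square in $K$, and in particular it is positive because $K\subseteq\R$.

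The argument is essentially bookkeeping. The only step needing care is noting that nonconstancy guarantees the "$+1$" does not affect the top coefficient; the same reasoning works over any field of characteristic zero for the squareness claim.
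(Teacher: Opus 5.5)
Your proof is correct and matches the paper's approach: the paper also compares leading coefficients and writes $\lc(a)=(\lc(u)/\lc(t))^2$ from $\lc(a)\lc(b)\lc(c)=\lc(u)^2$ and $\lc(b)\lc(c)=\lc(t)^2$, which is exactly your square-class computation. The only difference is that the paper handles parity by citing \cite[Lemma~2.2]{Jurasic2026}, whereas your degree bookkeeping (pairwise sums even plus total sum even) gives that step explicitly.
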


\begin{proof}
The conclusion about the parity of degrees of polynomials $a,b,c$ follows analogously as in \cite[Lemma~2.2]{Jurasic2026}. Moreover,
\[
 \lc(a)=\left(\frac{\lc(u)}{\lc(t)}\right)^2,
\]
and the analogous identities hold for \(b\) and \(c\). 
Thus, the leading coefficient of each of \(a,b,c\) is a nonzero square in \(K\). Since \(K\in\{\Q,\R\}\), they are positive.
\end{proof}

Further restrictions on the degrees will be developed in
Section~\ref{sec:degree-gap}. 

 \section{The quadratic case} \label{sec3}
 
In this section we treat four-square polynomial triples of quadratic polynomials. Jurasi\'c's construction \cite[Theorem~3.2]{Jurasic2026}
produces one such triple in \(\R[X]\), with entries determined
by the three real roots of a fixed cubic polynomial and by explicitly given
leading coefficients. We show that every quadratic four-square
polynomial triple in \(\R[X]\) can be transformed into that triple. 
To make
this precise, we call two triples \(\{a,b,c\},\{a',b',c'\}\subset K[X]\)
\emph{equivalent over \(K\in\{\Q,\R\}\)} if one is obtained from the other by a
permutation of the entries followed by a common invertible affine change of
variable \(X\mapsto\alpha X+\beta\), where \(\alpha\in K^*\) and
\(\beta\in K\).  Since permutations and invertible affine substitutions form a group, this is an equivalence
relation under which both the four-square
property \eqref{eq:four-squares} and the degrees of the entries
are preserved. The main result of
this section, \Cref{thm:quadratic-classification}, states that there is
exactly one equivalence class over \(\R\) of quadratic four-square polynomial triples
in \(\R[X]\), and \Cref{cor:no-rational-quadratic} shows the nonexistence of quadratic
four-square polynomial triples in \(\Q[X]\).

By \Cref{lem:degrees} and completion of the square, every quadratic
entry in a four-square polynomial triple can be written as the square of a
linear polynomial plus a constant. We begin by classifying a pair of such
quadratics.

\begin{lemma}
\label{lem:quadratic-pair} Let \(K\in\{\Q,\R\}\). Let \(\lambda,\mu\in K\), and let
\[
 f=P^2+\lambda,\qquad g=Q^2+\mu,
\]
where \(P,Q\in K[X]\) are linear polynomials. Write
$
 Q=mP+n,
$
with \(m\in K^*\) and \(n\in K\). Then \(fg+1\) is a square in \(K[X]\)
if and only if one of the following conditions holds:
\begin{enumerate}
 \item \(n=0\), and for some \(p\in K^*\),
 \[
  g=p^2f+2p,\qquad fg+1=(pf+1)^2;
 \]
 \item \(n\ne0\), and
 \[
  \lambda=-\frac1{n^2},\qquad
  \mu=-\frac{m^2}{n^2},\qquad
  fg+1=\left(PQ-\frac{m}{n^2}\right)^2.
 \]
\end{enumerate}
\end{lemma}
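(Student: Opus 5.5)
Since $P$ is linear, we may take $P$ itself as the variable. Then $f=P^2+\lambda$ and $g=(mP+n)^2+\mu$ are quadratics in $P$, and $fg+1$ is a quartic in $P$ with leading coefficient $m^2$. If $fg+1=h^2$ with $h\in K[X]$, then $h$ has degree two in $P$. Replacing $h$ by $-h$ if needed, we may write
\[
 h=mP^2+\beta P+\gamma,\qquad \beta,\gamma\in K.
\]
The plan is to expand both sides and compare the coefficients of $P^3,P^2,P,1$. This gives four equations in the unknowns $\beta,\gamma$ and the parameters $m,n,\lambda,\mu$.

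Writing $g=m^2P^2+2mnP+(n^2+\mu)$ gives
\[
 fg+1=m^2P^4+2mnP^3+(n^2+\mu+\lambda m^2)P^2+2mn\lambda P+\lambda(n^2+\mu)+1,
\]
\[
 h^2=m^2P^4+2m\beta P^3+(\beta^2+2m\gamma)P^2+2\beta\gamma P+\gamma^2.
\]
The $P^3$ coefficient forces $\beta=n$. The $P$ coefficient then gives $n\gamma=mn\lambda$. The $P^2$ coefficient gives $2m\gamma=\mu+\lambda m^2$. The constant term gives $\gamma^2=\lambda(n^2+\mu)+1$.

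\emph{Case $n\neq0$.} Here $\gamma=m\lambda$. The $P^2$ equation then gives $\mu=m^2\lambda$. Substituting into the constant equation yields
\[
 m^2\lambda^2=\lambda n^2+m^2\lambda^2+1,
\]
so $\lambda=-1/n^2$ and $\mu=-m^2/n^2$. Then $h=mP^2+nP-m/n^2$. We check that this equals $PQ-m/n^2$, because $PQ=mP^2+nP$.

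\emph{Case $n=0$.} Here $g=m^2P^2+\mu$. The $P^2$ equation gives $\gamma=(\mu+\lambda m^2)/(2m)$. The constant equation becomes
\[
 (\mu+\lambda m^2)^2=4m^2(\lambda\mu+1),\quad\text{i.e.}\quad (\mu-\lambda m^2)^2=4m^2.
\]
Hence $\mu=\lambda m^2\pm2m$. Setting $p=\pm m\in K^*$ gives $\mu=p^2\lambda+2p$, so $g=p^2f+2p$. The identity $fg+1=p^2f^2+2pf+1=(pf+1)^2$ is then immediate.

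For the converse, in each case the displayed identity exhibits $fg+1$ as a square, and these identities have just been verified. So both directions are proved.

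The main delicate point is the normalization of $h$. It needs $\lc(h)^2=m^2$ with $m\in K$, which holds. It also needs the fact that a square root of a degree-four polynomial in $K[X]$ is a polynomial of degree two in the linear variable $P$. This holds because $K[X]=K[P]$ when $P$ is linear and nonconstant. Everything else is coefficient comparison.
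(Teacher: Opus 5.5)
Your proof is correct and follows essentially the same route as the paper. The only difference is how you reach $h=PQ+\gamma$: you compare the $P^3$ coefficients to force $\beta=n$, whereas the paper uses a degree argument on $(R-PQ)(R+PQ)$. After that, your three remaining coefficient equations coincide exactly with the paper's (with $\gamma=k$), and the case split $n=0$ versus $n\neq0$ is identical.
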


\begin{proof}
Let \(R\) be a square root of \(fg+1\). Then $R^2=P^2Q^2+\mu P^2+\lambda Q^2+\lambda\mu+1$,
so
\begin{equation}\label{RPQ}
 (R-PQ)(R+PQ)=\mu P^2+\lambda Q^2+\lambda\mu+1.
\end{equation}
The polynomial on the right-hand side of \eqref{RPQ} has degree at most two.
Choose the sign of \(R\) so that \(R\) and \(PQ\) have the same leading
coefficient. Then \(\deg(R-PQ)\leq1\). If \(\deg(R-PQ)=1\), then
\(\deg(R+PQ)=2\), so the left-hand side of \eqref{RPQ} has degree three, a
contradiction. Thus \(R=PQ+k\) for some \(k\in K\). Substituting
$R=PQ+k$ and $Q=mP+n$ into \eqref{RPQ} and comparing the coefficients of \(P^2\), \(P\), and \(1\), we obtain
\[
 \mu+\lambda m^2-2km=0,\qquad
 n(\lambda m-k)=0,\qquad
 \lambda n^2+\lambda\mu+1-k^2=0,
\]respectively.
If \(n=0\), these equations give
$k=\frac{\lambda m^2+\mu}{2m},$
$ (\mu-\lambda m^2)^2=4m^2.$ Set
$p=\frac{\mu-\lambda m^2}{2}.$
Then, \(p^2=m^2\). Since \(m\neq0\), this also gives \(p\neq0\). Moreover, \(\mu=p^2\lambda+2p\), and \(Q=mP\). Hence,
\begin{align*}
 g&=m^2P^2+\mu=p^2(P^2+\lambda)+2p=p^2f+2p,\\
 fg+1&=(pf+1)^2.
\end{align*}
If \(n\ne0\), the three coefficient equations give
\[
 k=\lambda m,\qquad \mu=\lambda m^2,\qquad \lambda n^2=-1,
\]
which is the second alternative and yields
\(R=PQ-m/n^2\). Conversely, direct substitution in either alternative gives
the displayed square identity.
\end{proof}

For a quadratic polynomial \(f=P^2+\lambda\in K[X]\), we define its center to be the unique zero of the linear polynomial \(P\).
Equivalently, the center is the $x$-coordinate of the vertex of
the graph of \(f\). In the
notation of the preceding lemma, \(Q=mP\) (equivalently, \(n=0\)) if and only if \(f\) and \(g\) have the same center.

\begin{lemma}
\label{lem:center-patterns}
Let \(K\in\{\mathbb{Q},\mathbb{R}\}\), and let
\(a,b,c\in K[X]\) be quadratic polynomials satisfying the
four-square conditions \eqref{eq:four-squares}. Then, the following center patterns are
impossible:

\begin{enumerate}
 \item exactly two of \(a,b,c\) have the same center;
 \item the three entries \(a,b,c\) have pairwise distinct centers.
\end{enumerate}

Consequently, every quadratic four-square polynomial triple over \(K[X]\)
have a common center.
\end{lemma}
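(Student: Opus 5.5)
The plan is to apply \Cref{lem:quadratic-pair} to each of the three pairs, after rewriting its second alternative in a coordinate-free form. By \Cref{lem:degrees}, write each entry as $a=\alpha_a(X-x_a)^2+\gamma_a$ with $\alpha_a>0$ a square in $K$, center $x_a$, and similarly for $b$ and $c$. For a pair $f,g$ with distinct centers, take $P=\sqrt{\alpha_f}(X-x_f)$ and $Q=\sqrt{\alpha_g}(X-x_g)$. Then
\[
m=\sqrt{\alpha_g/\alpha_f},\qquad n=Q(x_f)=\sqrt{\alpha_g}\,(x_f-x_g)\neq 0 ,
\]
and alternative~(2) becomes the symmetric pair of relations
\[
\gamma_f=-\frac{1}{\alpha_g d_{fg}^2},\qquad \gamma_g=-\frac{1}{\alpha_f d_{fg}^2},\qquad d_{fg}=x_f-x_g .
\]
For a pair with a common center, alternative~(1) gives $g=p^2f+2p$ with $p\neq0$, so $\alpha_g=p^2\alpha_f$ and $\gamma_g=p^2\gamma_f+2p$.

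\emph{Case 1 (exactly two centers coincide).} Say $x_a=x_b\neq x_c$, and set $d=x_a-x_c=x_b-x_c$. The pairs $(a,c)$ and $(b,c)$ are in alternative~(2). From them, $\gamma_c=-1/(\alpha_a d^2)=-1/(\alpha_b d^2)$, so $\alpha_a=\alpha_b$. Also $\gamma_a=-1/(\alpha_c d^2)=\gamma_b$. The pair $(a,b)$ is in alternative~(1), so $\alpha_b=p^2\alpha_a$ forces $p^2=1$. Then $\gamma_b=\gamma_a+2p$ together with $\gamma_a=\gamma_b$ gives $p=0$, a contradiction. This case uses only the three conditions $ab+1,ac+1,bc+1$.

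\emph{Case 2 (pairwise distinct centers).} All three pairs are in alternative~(2). Equating the two expressions obtained for each $\gamma$ gives
\[
\alpha_b d_{ab}^2=\alpha_c d_{ac}^2,\qquad \alpha_a d_{ab}^2=\alpha_c d_{bc}^2,\qquad \alpha_a d_{ac}^2=\alpha_b d_{bc}^2 .
\]
These are consistent with one another, so the pairwise conditions alone leave a positive-dimensional family. I will therefore normalize by an affine change of variable, taking $x_a=0$, $x_b=1$ and $x_c=t$. The $\alpha$'s are then determined up to one scale, and the $\gamma$'s are then determined as well.

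Next I impose $abc+1=u^2$. The polynomial $u$ is cubic, with leading coefficient $\sqrt{\alpha_a\alpha_b\alpha_c}$ and sign chosen. As in the proof of \Cref{lem:quadratic-pair}, $u$ agrees with the cubic part of $\sqrt{abc}$ expanded at infinity. The remainder $abc+1-u^2$ must then vanish identically, which gives a small system of polynomial equations in $t$ and the scale.

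This is the main obstacle: I must show the system has no solution with $t\neq0,1$ and positive $\alpha$'s, and I expect to need a resultant or Gröbner-type computation. A cleaner route I would try first is evaluation. At each center, say $X=x_a$, the value $a(x_a)=\gamma_a<0$, and $b(x_a)c(x_a)$ is computable from the relations above. Requiring $u(x_a)^2=\gamma_a b(x_a)c(x_a)+1\ge0$, together with the analogous conditions at $x_b$ and $x_c$, may already give a sign contradiction.

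Finally, once Cases 1 and 2 are excluded, the only remaining center pattern is the one in which all three entries share a center, which is the stated consequence.
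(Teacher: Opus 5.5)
Your Case~1 is correct and is the paper's argument in coordinate-free form. Your symmetric version of alternative~(2) is a valid restatement of \Cref{lem:quadratic-pair}, and it forces $p^2=1$ and then $p=0$ exactly as in the paper. In fact it even gives $a=b$ directly.

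The gap is Case~2, which is the substantive part of the lemma. You set up the system, but you never show it is inconsistent. Instead you defer to an unspecified resultant or Gröbner computation, and the fallback you propose cannot work, because positivity at the centers is satisfiable. Your three relations amount to the normal form
\[
a=X^2-\frac1{n^2},\qquad b=(mX+n)^2-\frac{m^2}{n^2},\qquad c=(\ell X+n)^2-\frac{\ell^2}{n^2},\qquad (m-\ell)^2=1 .
\]
Take $m=1$, $\ell=2$, $n^2=3/2$. Then all three pairwise conditions hold, and the centers $0,-n,-n/2$ are distinct. At these centers $abc+1$ takes the positive values $89/54$, $89/54$ and $167/216$. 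Since the lemma must hold over $\R$, evaluation at the centers yields no contradiction.

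What is missing is the actual coefficient comparison, and it is short once you use the right cubic approximant. Put $M=X(mX+n)(\ell X+n)$, the product of the three linear parts. Then $\deg(abc+1-M^2)\le4$, so $u=M+\rho X+\tau$. Write $S=m+\ell$ and $w=m\ell$, so that $S^2=4w+1$. The $X^4$ and $X^3$ coefficients force $\rho=-3w/(2n^2)$ and $\tau=-S/(2n)$. The coefficients of $X^2$, $X$ and $1$ then give
\[
3w^2=4n^4(w+1),\qquad S(w-2n^4)=0,\qquad 4n^8-4n^6-4n^4w-3n^4+4w^2=0 .
\]
If $S=0$, then $w=-1/4$ and $n^2=1/4$, and the last left-hand side equals $5/64$. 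If $S\neq0$, then $w=2n^4$, hence $n^4=1$ and $w=2$, and the last left-hand side equals $5$. This elementary elimination, which is how the paper proceeds, is the step your proposal needs in place of the unexecuted computation.
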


\begin{proof}
By Lemma \ref{lem:degrees}, $\lc(a)$ is a square
in $K$. Hence, after a common invertible affine change of variable over $K$,
\(X\mapsto \alpha X+\beta\), where $\alpha\in K^*$ and $\beta\in K$, we may assume that
\[
 a=X^2+\lambda.
\]

Suppose first that exactly two entries have the same center. After
relabeling, assume that \(a\) and \(b\) have the same center, whereas
\(c\) has a different center. Applying \Cref{lem:quadratic-pair} to the pairs $(a,b)$ and $(a,c)$, using respectively the same-center and shifted-center alternatives, there are
\(p,\rho,n\in K^*\) such that
$b=p^2a+2p,$
 $\lambda=-\frac{1}{n^2},$
 $c=(\rho X+n)^2-\frac{\rho^2}{n^2}.$
Thus $a=X^2-\frac{1}{n^2}$ and
 $b=p^2X^2-\frac{p^2}{n^2}+2p.$ Set \(P=pX\). Then,
\[
 b=P^2-\frac{p^2}{n^2}+2p,
 \qquad
 c=\left(\frac{\rho}{p}P+n\right)^2
      -\frac{\rho^2}{n^2}.
\]
The polynomials \(b\) and \(c\) have different centers, so the
second, shifted-center alternative of \Cref{lem:quadratic-pair} applied to this pair yields
\[
  -\frac{p^2}{n^2}+2p=-\frac{1}{n^2},\quad  -\frac{\rho^2}{n^2}
 =
 -\frac{(\rho/p)^2}{n^2}.
\]
Since \(\rho n\neq0\), the second equality implies \(p^2=1\).
Substituting this into the first equality gives \(2p=0\), which contradicts $p\in K^*$.
Hence, the pattern with exactly two common centers cannot occur.

\medskip

It remains to consider the case of three pairwise distinct centers. Apply the
shifted-center alternative of \Cref{lem:quadratic-pair} first to the pairs
\((a,b)\) and \((a,c)\). We
first allow different shift parameters \(n_b\) and $n_c$:
$a=X^2+\lambda,$
 $b=(mX+n_b)^2-\frac{m^2}{n_b^2},$
$ c=(\ell X+n_c)^2-\frac{\ell^2}{n_c^2}$, 
where \(m,\ell,n_b,n_c\in K^*\), and
$\lambda=-\frac1{n_b^2}=-\frac1{n_c^2}.$
Thus \(n_c^2=n_b^2\). Replacing the linear polynomial \(\ell X+n_c\) by its negative leaves \(c\) unchanged, so, after replacing \(\ell\) by \(-\ell\) if necessary, we may assume \(n_c=n_b=:n\). Hence,
\[
 a=X^2-\frac1{n^2},\qquad
 b=(mX+n)^2-\frac{m^2}{n^2},\qquad
 c=(\ell X+n)^2-\frac{\ell^2}{n^2}.
\]
To apply \Cref{lem:quadratic-pair} to the pair \((b,c)\), put \(P=mX+n\). Then,
$\ell X+n=\frac{\ell}{m}P+n-\frac{\ell n}{m}$
and
\begin{align*}
 b =P^2-\frac{m^2}{n^2}, \qquad 
 c =\left(\frac{\ell}{m}P+n-\frac{\ell n}{m}\right)^2
      -\frac{\ell^2}{n^2}.
\end{align*}
Since \(b\) and \(c\) have distinct centers, \(n-\frac{\ell n}{m}\neq0\). The shifted-center alternative gives
$ -\frac{m^2}{n^2}
 =-\frac1{\left(n-\frac{\ell n}{m}\right)^2},$
and therefore
\begin{equation}\label{nc}
 \left(n-\frac{\ell n}{m}\right)^2=\frac{n^2}{m^2}.
\end{equation}
Since \(m,n\neq0\), equation \eqref{nc} is equivalent to
\((m-\ell)^2=1\). Thus the three-distinct-center case has the normal form
\[
 \begin{aligned}
 a&=X^2-\frac{1}{n^2},\\
 b&=(mX+n)^2-\frac{m^2}{n^2},\\
 c&=(\ell X+n)^2-\frac{\ell^2}{n^2},
 \end{aligned}
 \qquad (m-\ell)^2=1.
\]
The assumptions \(m,\ell,n\ne0\) and \((m-\ell)^2=1\) ensure that the
three centers are pairwise distinct. For these parameters,
\(\{a,b,c\}\) is a polynomial Diophantine triple, since
\begin{align*}
 ab+1&=\left(X(mX+n)-\frac{m}{n^2}\right)^2,\\
 ac+1&=\left(X(\ell X+n)-\frac{\ell}{n^2}\right)^2,\\
 bc+1&=\left((mX+n)(\ell X+n)-\frac{m\ell}{n^2}\right)^2.
\end{align*}

Put \(S=m+\ell\) and \(w=m\ell\). Then, \(S^2=4w+1\). Also,
\begin{align*}
 M&=X(mX+n)(\ell X+n)\\
  &=wX^3+nSX^2+n^2X.
\end{align*}
Moreover, $\deg\bigl(abc+1-M^2\bigr)\leq 4.$
If \(abc+1=v^2\), for some $v\in K[X]$, then
\[
 abc+1-M^2=(v-M)(v+M).
\]
Both \(v\) and \(M\) have degree \(3\). Choose the sign of \(v\) so
that \(v\) and \(M\) have the same leading coefficient. Then, \(\deg(v+M)=3\). Since $ \deg\bigl((v-M)(v+M)\bigr) =\deg\bigl(abc+1-M^2\bigr)\le4, $
either \(v=M\) or \(\deg(v-M)\le1\). Hence, there exist \(\rho,\tau\in K\) such that
$ v=M+\rho X+\tau, $ that is, $$v=wX^3+nSX^2+(n^2+\rho)X+\tau.$$ Comparison of the coefficients of \(X^4\) and \(X^3\) in \(abc+1=v^2\) gives
\[
 \rho=-\frac{3w}{2n^2},
 \qquad
 \tau=-\frac{S}{2n},
\]respectively. Using \(S^2=4w+1\), the remaining coefficient equations, corresponding to the coefficients of \(X^2\), \(X\), and \(1\), respectively, reduce to
\begin{align}
 3w^2&=4n^4(w+1), \label{eq:three-centers-1}\\
 S(w-2n^4)&=0, \label{eq:three-centers-2}\\
 4n^8-4n^6-4n^4w-3n^4+4w^2&=0.
 \label{eq:three-centers-3}
\end{align}
If \(S=0\), then \(S^2=4w+1\) gives \(w=-1/4\). Equation
\eqref{eq:three-centers-1} then gives \(n^4=1/16\). Hence, \(n^2=1/4\). Substituting into \(\eqref{eq:three-centers-3}\), its left-hand side equals \(5/64\), a contradiction. If \(S\neq0\), then \eqref{eq:three-centers-2} gives
\(w=2n^4\). Substitution in \eqref{eq:three-centers-1} yields
\(n^4=1\), so \(w=2\) and $n^2=1$. The left-hand side of
\eqref{eq:three-centers-3} then equals \(5\), again a contradiction.
Therefore, the case of three pairwise distinct centers is also impossible.

The only remaining possibility is that all three quadratic entries have a
common center.
\end{proof}

The following lemma gives a complete parametrization of the case
in which all three quadratic entries have the same center. It leads
to the cubic equation \eqref{eq:common-G} for a parameter \(k\).
As we shall see in \Cref{prop:identification-Jurasic} and \Cref{rem:three-roots},
its three real roots yield equivalent triples, corresponding to the
three possible choices of the entry normalized to be monic.

\begin{lemma}
\label{lem:common-center-family}
Let \(K\in\{\Q,\R\}\), and let \(a,b,c\in K[X]\) be  
quadratic polynomials with a common center. Then \(a,b,c\) satisfy the four-square conditions \eqref{eq:four-squares} if and only if, after a common invertible affine change of variable and, if necessary, a permutation of \(b\) and \(c\), there exists \(k\in K\) satisfying
\begin{equation}\label{eq:common-G}
 G(k):=16k^3+55k^2-64k+16=0,
\end{equation}
such that, with
\begin{equation}\label{eq:common-parameters}
 \sigma=\frac{k(8k-3)}{4(2k-1)},
\end{equation}
and
\begin{equation}\label{eq:common-pq}
 p=\frac{\sigma+1}{2},\qquad
 q=\frac{\sigma-1}{2},
\end{equation}
the triple has the form
\begin{equation}\label{eq:common-triple}
 a=X^2-\frac1{k^2},\qquad
 b=p^2a+2p,
 \qquad
 c=q^2a+2q.
\end{equation}
In this case, the four square roots may be chosen as
\begin{equation}\label{eq:common-four-roots}
 r=pa+1,\quad  s=qa+1,\quad t=pqa+p+q,\quad  u=X(pqa+k).
\end{equation}
Moreover, the root \(k\) of \(G\) is uniquely determined by the normalized
triple \eqref{eq:common-triple}.
\end{lemma}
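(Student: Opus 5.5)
We first normalize, then classify the three pairwise conditions with \Cref{lem:quadratic-pair}, and finally analyze $abc+1$ as we did in the proof of \Cref{lem:center-patterns}.

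\textbf{Normalization.} By \Cref{lem:degrees}, $\lc(a)$ is a nonzero square in $K$. Since the entries share a center, a common affine change of variable over $K$ makes $a=X^2+\lambda$ with common center $0$. Each pair then falls under the same-center alternative of \Cref{lem:quadratic-pair}. Hence $b=p^2a+2p$ and $c=q^2a+2q$ for some $p,q\in K^*$, and $r=pa+1$, $s=qa+1$.

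\textbf{The condition $bc+1$.} Apply \Cref{lem:quadratic-pair} to the pair $(b,c)$, written as $b=(pX)^2+(p^2\lambda+2p)$, with $c$ expressed through $b$. Same-center with ratio $p'$ gives $c=p'^2b+2p'$, so $q^2=p'^2p^2$ and $2q=2p'^2p+2p'$. The option $q=p'p$ leads to $p'=1$, i.e.\ $c=b$, which is excluded. The option $q=-p'p$ gives $-pp'=p'^2p+p'$, so $p'(p'p+p+1)=0$, and then $q=p+1$. Thus $\{p,q\}$ differ by $1$. After swapping $b,c$ if necessary, $p=q+1$, so $p=(\sigma+1)/2$ and $q=(\sigma-1)/2$ with $\sigma=p+q$. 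Then $bc+1=(pqa+p+q)^2$ can be verified directly, which gives $t$.

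\textbf{The condition $abc+1$.} Expand
\[
abc=a(p^2a+2p)(q^2a+2q)=p^2q^2a^3+2pq(p+q)a^2+4pqa.
\]
Because $a=X^2+\lambda$ is even in $X$, $abc+1$ is an even polynomial of degree $6$. Mimicking the earlier argument, write a square root with the same leading coefficient as $pqXa$ and compare $(v-M)(v+M)$ degrees. This forces $v=X(pqa+\kappa)+\tau$ with $\tau$ constant. Next, evenness of $v^2$ should force $\tau=0$, or else $v$ purely even, which is impossible in odd degree $3$. We still need to check that the coefficient of $X$ in $v^2$, namely $2\tau\kappa$-type terms, vanishes.

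So $abc+1=X^2(pqa+\kappa)^2$. Then $X=0$ must be a root of $abc+1$, which gives $1+a(0)b(0)c(0)=0$. Comparing coefficients of $a^2$, $a$, $1$ after writing $X^2=a-\lambda$ gives polynomial equations in $p,q,\lambda,\kappa$. Setting $\lambda=-1/k^2$ and $\kappa=k$ (the natural guess, since $u=X(pqa+k)$), eliminating $\sigma$ should give $\sigma=k(8k-3)/(4(2k-1))$ together with $G(k)=0$. This elimination, a resultant computation, is the main obstacle. I would do it by solving the linear-in-$\sigma$ equations first and substituting into the remaining one, tracking the exceptional denominators $k=1/2$ and $pq=0$ separately (check $G(1/2)\neq0$). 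The converse is direct substitution, and it reduces to the vanishing of $G$.

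\textbf{Uniqueness of $k$.} From the normalized triple, $a=X^2-1/k^2$ determines $k^2$. The coefficient identity for $\kappa=k$ then fixes the sign of $k$, because $u$ is determined up to sign and $X(pqa+k)$ and its negative give $\pm k$ together with $\pm pq$. Since $pq$ is fixed by $b,c$, the sign of $k$ is determined.
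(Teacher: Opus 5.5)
Your plan is the paper's own. You normalize $a=X^2+\lambda$, use the same-center alternative of \Cref{lem:quadratic-pair} for $(a,b)$ and $(a,c)$, force $(p-q)^2=1$, use evenness of $abc+1$ to get $u=X(pqa+k)$, and compare coefficients as polynomials in $a$. The plan works, but one step is false as written and the decisive computation is left undone and misdescribed.

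In the $bc+1$ step, the case $q=p'p$ does not lead to $p'=1$. Substituting into $q=p'^2p+p'$ and dividing by $p'\neq0$ gives $p=p'p+1$, hence $q=p-1$. This is a genuine solution, not an excluded one; besides, $p'=1$ would give $c=b+2$, not $c=b$. Your conclusion that $p,q$ differ by $1$ survives only because the other case gives $q=p+1$. The paper avoids this case analysis with the identity $bc+1=(pqa+p+q)^2-(p-q)^2+1$. If $bc+1=N^2$, then $(N-M)(N+M)$ is the constant $1-(p-q)^2$ while $M=pqa+p+q$ is nonconstant, so that constant must vanish. This also gives $t$ at once.

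For $abc+1$, your parity remark already settles $\tau=0$: the odd part of $v^2$ is $2\tau X(pqa+\kappa)$, which is nonzero unless $\tau=0$. So nothing remains to check there. After that nothing needs to be guessed and no resultant is needed. With $\pi=pq$ and $\sigma=p+q$, comparing coefficients of $a^2,a,1$ gives the triangular system $2\kappa-\lambda\pi=2\sigma$, $\kappa^2-2\lambda\pi\kappa=4\pi$, $-\lambda\kappa^2=1$.
\begin{itemize}
\item The last equation forces $\kappa\neq0$ and $\lambda=-1/\kappa^2$, so $k:=\kappa$ is defined, not guessed.
\item The middle one becomes $2\pi(2k-1)=k^3$, which rules out $k=\tfrac12$ and gives $\pi=k^3/(2(2k-1))$.
\item The first then gives the stated $\sigma$.
\item Since $\sigma^2-4\pi-1=-G(k)/(16(2k-1)^2)$, the relation $(p-q)^2=\sigma^2-4\pi=1$ becomes $G(k)=0$.
\end{itemize}
The same identity drives the converse. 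There you should also record $\pi\neq0$, so $p,q\neq0$, and pairwise distinctness of the entries, e.g.\ $b-c=(p-q)((p+q)a+2)\neq0$. Your uniqueness argument is valid; the paper reads it off directly as $k=\sigma+\lambda\pi/2$.
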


\begin{proof}
Assume that \(a,b,c\) satisfy the four-square conditions
\eqref{eq:four-squares}. By \Cref{lem:degrees}, the leading
coefficient of \(a\) is a nonzero square in \(K\). Therefore, after
completing the square by a common  invertible affine change of variable over
\(K\), we may assume that
\[
 a=X^2+\lambda,
 \qquad \lambda\in K.
\]
Since all three centers coincide, the first alternative of
\Cref{lem:quadratic-pair},
applied to the pairs \((a,b)\) and \((a,c)\), gives
\begin{equation}\label{eq:common-bc-initial}
 b=p^2a+2p,\qquad c=q^2a+2q
\end{equation}
for some \(p,q\in K^*\).  In particular,
\[
 ab+1=(pa+1)^2,
 \qquad
 ac+1=(qa+1)^2.
\]
By \eqref{eq:common-bc-initial}, we obtain
\begin{align}
 bc+1
  &=(pqa+p+q)^2-(p-q)^2+1.\label{eq:common-bc-square}
\end{align}
Hence, \(bc+1\) is a square if \((p-q)^2=1\).
Conversely, suppose \(bc+1=N^2\) for some $N\in K[X]$, and put
\(M=pqa+p+q\) and \(\kappa=1-(p-q)^2\in K\). Equation
\eqref{eq:common-bc-square} gives
$(N-M)(N+M)=N^2-M^2=\kappa.$
If \(\kappa\ne0\), then both \(N-M\) and \(N+M\) are units in \(K[X]\),
hence constants. It follows that \(M\) is constant, which is impossible because \(a\) is nonconstant and
\(pq\ne0\).  Therefore \(\kappa=0\), and
\begin{equation}\label{eq:p-minus-q}
 (p-q)^2=1.
\end{equation}
After interchanging \(b\) and \(c\), equivalently \(p\) and \(q\), if necessary, we may assume
\(p-q=1\).

Set \(\pi=pq\) and \(\sigma=p+q\). Then
\eqref{eq:p-minus-q} is equivalent to
\begin{equation}\label{eq:sigma-pi-common}
 \sigma^2-4\pi=1.
\end{equation}
Furthermore, by \eqref{eq:common-bc-initial},
\begin{align}
 abc+1
  &=\pi^2a^3+2\pi\sigma a^2+4\pi a+1.
  \label{eq:abc-common-expanded}
\end{align}
The right-hand side of \eqref{eq:abc-common-expanded} is an even polynomial in
\(X\). Hence, if \(abc+1=u^2\), then \([u(-X)]^2=[u(X)]^2\), so
\(u(-X)=u(X)\) or \(u(-X)=-u(X)\).  Since $u$ has odd degree $3$, it cannot be an even
polynomial. Hence $u(-X)=-u(X)$, that is, $u=\alpha X^3+\beta X=X(\alpha X^2+\beta)$
for some \(\alpha,\beta\in K\). The leading term of \(abc+1\) is \(\pi^2X^6\), so the leading coefficient of \(u\) is \(\pm\pi\).
Replacing \(u\) by \(-u\), if necessary, we may assume that its
leading coefficient is \(\pi\) and
\(u=X(\pi X^2+\beta)\). Since \(a=X^2+\lambda\), we have
\(\pi X^2+\beta=\pi a+k\), where
\(k=\beta-\pi\lambda\in K\). Thus
\begin{equation}\label{eq:u-common}
 u=X(\pi a+k)
\end{equation}
for some \(k\in K\). Since \(X^2=a-\lambda\), squaring \eqref{eq:u-common} gives
\begin{align}
 u^2
  &=\pi^2a^3+(2\pi k-\lambda\pi^2)a^2
    +(k^2-2\lambda\pi k)a-\lambda k^2.
 \label{eq:u-common-expanded}
\end{align}
Since \(a\) is nonconstant, we may compare coefficients as polynomials in \(a\). Comparing the coefficients of \(a^2,a,1\) in
\eqref{eq:abc-common-expanded} and \eqref{eq:u-common-expanded} yields
\begin{align}
 2k-\lambda\pi&=2\sigma,
 \label{eq:common-coeff-1}\\
 k^2-2\lambda\pi k&=4\pi,
 \label{eq:common-coeff-2}\\
 -\lambda k^2&=1.
 \label{eq:common-coeff-3}
\end{align}
Equations \eqref{eq:common-coeff-3}, \eqref{eq:common-coeff-2},
and \eqref{eq:common-coeff-1} first imply that
\(k\notin\{0,\frac12\}\) and then yield, respectively,
$\lambda=-\frac1{k^2},$ $\pi=\frac{k^3}{2(2k-1)},$ and  $\sigma=\frac{k(8k-3)}{4(2k-1)}.$ Note that \(\lambda\), \(\pi\), and \(\sigma\) are determined by the
normalized triple, so \eqref{eq:common-coeff-1} shows that \(k\) is uniquely
determined by it, as claimed. 
Thus, $\sigma$ is given by \eqref{eq:common-parameters}. Finally, substituting these expressions into
\eqref{eq:sigma-pi-common} gives
\begin{equation*}
 16k^3+55k^2-64k+16=0,
\end{equation*}
which is \eqref{eq:common-G}. Since a root of \(G\) cannot satisfy
\(k=0\) or \(2k-1=0\), all displayed expressions are defined.

Conversely, let \(k\in K\) satisfy \eqref{eq:common-G}. Define
\(\sigma\) by \eqref{eq:common-parameters} and 
\(p,q\) by \eqref{eq:common-pq}, and set $\pi=\frac{k^3}{2(2k-1)}$.  Since
$ \sigma^2-4\pi-1 =-\frac{G(k)}{16(2k-1)^2}, $ we have \(\sigma^2-4\pi=1\). Hence,
$$ p-q=1,\qquad p+q=\sigma,\qquad pq=\pi. $$ Since \(k\notin\{0,\frac12\}\), we have \(\pi\neq0\), so \(p,q\neq0\). The entries are pairwise distinct: for example, \(a=b\) would
force \((p^2-1)a+2p=0\), which is impossible for nonconstant \(a\) and nonzero
\(p\). Similarly, \(a\neq c\). Moreover, \(b-c=(p-q)((p+q)a+2)\ne0\). Direct substitution then gives all four
identities in \eqref{eq:common-four-roots}. This proves both necessity and
sufficiency.
\end{proof}

\begin{lemma}
\label{lem:G-roots}
The polynomial \eqref{eq:common-G} is irreducible over \(\Q\) and
has three distinct real roots  $k_1<k_2<k_3$,
\[
k_1\approx-4.39857,\quad k_2\approx0.42080,\quad k_3\approx0.54027.
\]
\end{lemma}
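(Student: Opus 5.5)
Irreducibility over \(\Q\) follows from the rational root test, since \(G\) is a cubic and is reducible over \(\Q\) only if it has a rational root. By Gauss's lemma any rational root \(k=u/v\) in lowest terms has \(u\mid 16\) and \(v\mid 16\), so \(k=\pm 2^i/2^j\) with \(0\le i,j\le 4\). A cleaner route than checking all of these is to reduce modulo a small prime. Modulo \(3\), \(G(k)\equiv k^3+k^2-k+1\), and \(G(0)=1\), \(G(1)=2\), \(G(2)=8+4-2+1=11\equiv 2\), so \(G\) has no root in \(\mathbb{F}_3\). The leading coefficient \(16\) is not divisible by \(3\), so the reduction is still a cubic. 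A cubic with no root in \(\mathbb{F}_3\) is irreducible there, and a factorization over \(\Q\) (which by Gauss's lemma can be taken in \(\Z[k]\), with leading coefficients prime to \(3\)) would reduce to a factorization with a linear factor. Hence \(G\) is irreducible over \(\Q\). As a cross-check I would also evaluate \(G\) at the candidates \(\pm1,\pm\tfrac12,\pm\tfrac14,\dots\).

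For the three real roots, I would compute the discriminant of the cubic \(Ak^3+Bk^2+Ck+D\) with \((A,B,C,D)=(16,55,-64,16)\), namely
\[
\disc(G)=18ABCD-4B^3D+B^2C^2-4AC^3-27A^2D^2 .
\]
Showing it is positive gives three distinct real roots. Alternatively, and more robustly against arithmetic slips, I would exhibit sign changes:
\begin{itemize}
\item \(G(-5)=-2000+1375+320+16<0\) and \(G(-4)=-1024+880+256+16>0\) give \(k_1\in(-5,-4)\);
\item \(G(0)=16>0\) and \(G(0.45)\approx 1.458+11.14-28.8+16<0\) give \(k_2\in(0,0.45)\);
\item \(G(0.45)<0\) and \(G(1)=23>0\) give \(k_3\in(0.45,1)\).
\end{itemize}
Three sign changes for a cubic give exactly three distinct real roots, and this already orders them.

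The numerical values \(k_1\approx-4.39857\), \(k_2\approx0.42080\), \(k_3\approx0.54027\) would then be confirmed by bisection or Newton's method, for instance by checking the sign of \(G\) at the endpoints \(k_i\pm 5\cdot10^{-6}\), as done in Mathematica. The main delicacy is the two roots \(k_2,k_3\), which lie close together near \(1/2\). Choosing a separating point such as \(0.45\) or \(0.5\) needs care: \(G(0.5)=2+13.75-32+16=-0.25<0\), so \(0.5\) also separates them and gives a clean exact check. I would use \(k=1/2\) as the separating point.
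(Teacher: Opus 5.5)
Your proof is correct. The irreducibility argument is the paper's: reduce modulo $3$ to $k^3+k^2-k+1$, check that it has no root in $\mathbb F_3$, and lift back to $\Q$ by Gauss's lemma, using $3\nmid 16$.

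For the real roots, the paper takes the first route you mention. It evaluates $\disc(G)=529984=728^2>0$ and cites Mathematica for the decimal values.

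Your sign-change route is a different and slightly more informative argument. It rests on the exact values
$G(-5)=-289$, $G(-4)=128$, $G(0)=16$, $G(1/2)=-1/4$, $G(1)=23$. With only exact rational arithmetic and the intermediate value theorem (plus the bound of three roots for a cubic), it proves there are three distinct real roots. It also isolates and orders them in $(-5,-4)$, $(0,1/2)$ and $(1/2,1)$. So the stated decimals become refinements inside rigorously known brackets, not an unsupported citation. Using $1/2$ rather than $0.45$ as the separating point is the right choice: $G(0.45)\approx-0.20$ is close enough to zero that a rounded evaluation needs care.

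The discriminant route is a single computation and has a side benefit. Since $\disc(G)$ is a rational square, the Galois group of the irreducible cubic $G$ is cyclic of order $3$, so each root lies in the field generated by any other. This is consistent with \Cref{rem:three-roots}, where the data for one root determine the other two rationally.
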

\begin{proof}
One has
\[
 G(k)\equiv k^3+k^2-k+1 \pmod 3.
\]
For \(k=0,1,2\), this is congruent to \(1,2,2\) modulo $3$, respectively, so $G$ has no root in
\(\mathbb F_3\). Being cubic, it is irreducible over \(\mathbb F_3\). Since
\(G\) is primitive and its reduction modulo \(3\) has the same degree and is
irreducible over \(\mathbb F_3\), Gauss's lemma \cite{Lang} implies that
\(G\) is irreducible over \(\Q\). Furthermore,
\[
 \disc(G)=529984=728^2>0,
\]
so \(G\) has three distinct real roots. The approximate values were computed with Mathematica.
\end{proof}

We now show that, for each real root \(k\) of \(G\), the triple
\eqref{eq:common-triple} coincides with the triple constructed by
Jurasi\'c \cite{Jurasic2026}, up to a rescaling of the variable
and a permutation of the entries.

\begin{proposition}
\label{prop:identification-Jurasic}
Let \(k\in\R\) be a root of the polynomial \(G\) in \eqref{eq:common-G}, and let \(\{a,b,c\}\) be the real four-square polynomial triple
\eqref{eq:common-triple}. Then there is a real number \(h>0\) such that,
after the affine change of variable \(X=\sqrt{h}\,z\) and a permutation of
the entries, the triple becomes
\begin{equation}\label{eq:Jurasic-triple}
 a=A(z^2-\alpha),\qquad
 b=B(z^2-\beta),\qquad
 c=C(z^2-\gamma),
\end{equation}
where \(\alpha<\beta<\gamma\) are the three real roots of the polynomial
\begin{equation}\label{eq:Jurasic-polynomial}
 f(Y)=Y^3-2Y^2+Y-\frac4{91},
\end{equation}
and
\begin{equation}\label{eq:Jurasic-ABC}
 A=\frac{2(\gamma-\beta)}{(\beta-\alpha)(\gamma-\alpha)},\qquad
 B=\frac{2(\gamma-\alpha)}{(\beta-\alpha)(\gamma-\beta)},\qquad
 C=\frac{2(\beta-\alpha)}{(\gamma-\alpha)(\gamma-\beta)}.
\end{equation}
The triple \eqref{eq:Jurasic-triple} is the one constructed in
\cite[Theorem~3.2]{Jurasic2026}; approximate values of
\(\alpha,\beta,\gamma\) and \(A,B,C\) are listed in
\cite[Remark~3.3]{Jurasic2026}. In particular, the triples
\eqref{eq:common-triple} corresponding to the three real roots of \(G\) are
equivalent over \(\R\) to each other.
\end{proposition}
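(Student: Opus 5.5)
The plan is to avoid matching Jurasi\'c's formulas directly against \eqref{eq:common-triple}. Instead, I will use the uniqueness built into \Cref{lem:common-center-family} and run the argument in reverse. Jurasi\'c's triple \eqref{eq:Jurasic-triple} is, by \cite[Theorem~3.2]{Jurasic2026}, a quadratic four-square polynomial triple in $\R[z]$. Each of its entries is a positive multiple of $z^2-(\text{const})$, so all three entries have the common center $0$ (consistent with \Cref{lem:center-patterns}). Hence \Cref{lem:common-center-family} applies to it. After a common affine change of variable, and possibly swapping the two non-normalized entries, it takes the form \eqref{eq:common-triple} for a root $k$ of $G$ that is uniquely determined by the normalized triple.

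The key step is to observe that we may choose which entry plays the role of $a$. Take the entry $A(z^2-\alpha)$ and substitute $z=X/\sqrt{A}$, which is legitimate because $A>0$ by \Cref{lem:degrees} (and visibly from \eqref{eq:Jurasic-ABC}). This gives the monic entry $X^2-A\alpha$. The normalization in the proof of \Cref{lem:common-center-family} only completes the square and scales to make the chosen entry monic, and here the center is already $0$. Therefore $-1/k^2=-A\alpha$, so $k^2=1/(A\alpha)$. Doing the same with $B(z^2-\beta)$ and $C(z^2-\gamma)$ as the monic entry gives roots $k$ of $G$ with $k^2=1/(B\beta)$ and $k^2=1/(C\gamma)$, respectively.

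The three values $A\alpha$, $B\beta$, $C\gamma$ must then be checked to be pairwise distinct. I would do this using the approximate values from \cite[Remark~3.3]{Jurasic2026}, or symbolically by eliminating $\alpha,\beta,\gamma$ through $f$. Once they are distinct, the three resulting roots of $G$ are distinct, so they exhaust $\{k_1,k_2,k_3\}$ by \Cref{lem:G-roots}. As a consistency check, the squares $k_i^2$ are pairwise distinct, and $1/k_i^2$ should match $A\alpha$, $B\beta$, $C\gamma$ in some order.

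Consequently, for each root $k$ of $G$, the triple \eqref{eq:common-triple} is obtained from \eqref{eq:Jurasic-triple} by the substitution $z=X/\sqrt{h}$ with $h=A$, $B$ or $C$, followed by a permutation. Inverting this substitution gives $X=\sqrt{h}\,z$, as stated. The equivalence of the three triples over $\R$ follows by transitivity.

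One subtlety needs a remark. For fixed $k$, the uniqueness statement of \Cref{lem:common-center-family} must be combined with the freedom to swap $b$ and $c$, i.e.\ $p$ and $q$. The normalization forces $p-q=1$, and this fixes which of the remaining two entries is $b$. The resulting map is therefore a genuine equality of triples up to permutation.

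The main obstacle I expect is the distinctness of $A\alpha$, $B\beta$, $C\gamma$, which is exactly where the three roots are separated. A clean symbolic route would be to compute the monic cubic whose roots are $1/(A\alpha)$, $1/(B\beta)$, $1/(C\gamma)$, by resultants in $\Q(\alpha,\beta,\gamma)$ reduced via $f$. I would then verify that it equals the cubic satisfied by $k^2$ for $k$ a root of $G$, namely the polynomial obtained by eliminating $k$ from $G(k)G(-k)$. If that computation becomes unwieldy, I would fall back on Mathematica numerics with certified interval bounds, in the same way the paper already relies on computed approximations.
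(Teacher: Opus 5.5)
Your proof is correct, but it runs in the opposite direction from the paper's. The paper starts from a root $k$ of $G$ and takes the explicit value $h=(3-4k)/k^2$. It then uses the identity $\pi^2h^3-\frac{91}{4}=-G(k)/(2k-1)^2$ to rewrite $abc+1=u^2$ as $\frac{91}{4}Y(Y-1)^2$ with $Y=z^2$. Together with $ABC=\pi^2h^3=\frac{91}{4}$, this shows that $\alpha,\beta,\gamma$ are the roots of $f$. Finally, \cite[Lemma~3.1]{Jurasic2026}, applied to the three pairwise conditions, forces the formulas \eqref{eq:Jurasic-ABC}. Because only $h$ depends on $k$, all three roots are treated at once with no separation argument. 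Moreover, the four-square property of Jurasi\'c's triple is re-derived from the sufficiency part of \Cref{lem:common-center-family} rather than imported from \cite[Theorem~3.2]{Jurasic2026}.

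You instead import that theorem and exploit the freedom, in \Cref{lem:common-center-family}, to choose which entry is normalized to be monic. This gives $1/k^2\in\{A\alpha,B\beta,C\gamma\}$ and $h\in\{A,B,C\}$ with almost no computation. It also makes transparent the correspondence between roots of $G$ and distinguished entries, which the paper only explains afterwards in \Cref{rem:three-roots}.

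The price is the separation step you flagged. It is genuinely part of the proof: without it you do not know that every root of $G$ is reached. It holds, and can be checked exactly. Since $(\beta-\alpha)(\gamma-\alpha)(\gamma-\beta)=\sqrt{\disc(f)}=32/91$, one has $A\alpha=\frac{91}{16}\alpha(\gamma-\beta)^2$, $B\beta=\frac{91}{16}\beta(\gamma-\alpha)^2$ and $C\gamma=\frac{91}{16}\gamma(\beta-\alpha)^2$. This set is invariant under permutations of the roots, so its characteristic polynomial can be computed from the elementary symmetric functions $2,1,\frac{4}{91}$ of the roots of $f$. It is $E(T)/256$, whose roots are approximately $0.0517$, $3.426$ and $5.648$. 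More directly, $0<\alpha<\beta<\gamma$ gives $A\alpha<B\beta$ at once, and the remaining comparison is settled by these well-separated numerical values.
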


\begin{proof}
Let \(k\in\mathbb{R}\) be a root of the polynomial \(G\) given by \eqref{eq:common-G}, and set
$\pi=pq=\frac{k^3}{2(2k-1)}.$
Put
\begin{equation}\label{h}
 h=\frac{3-4k}{k^2}
   =\frac1{k^2}-\frac{k}{\pi}.
\end{equation}
A direct calculation gives
$\pi^2h^3-\frac{91}{4}
   =-\frac{G(k)}{(2k-1)^2}.$
Since \(G(k)=0\), it follows that
$ \pi^2h^3=\frac{91}{4}.$
In particular, \(h>0\). We may therefore make the real affine change of variable $ X=\sqrt{h}\,z$, and we put $Y=z^2$.

Since
$ a=X^2-\frac1{k^2},$
$ b=p^2a+2p,$ 
$ c=q^2a+2q,$
and \(X^2=hY\), each of \(a,b,c\) is a linear polynomial in \(Y\).
Write
\[
 a=A(Y-\alpha),\qquad
 b=B(Y-\beta),\qquad
 c=C(Y-\gamma),
\]
where \(\alpha,\beta,\gamma\in\R\) and
\[
 A=h,\qquad B=p^2h,\qquad C=q^2h.
\]
Since \(h>0\), we have \(A,B,C>0\).

Using
\[
 u=X(\pi a+k),\qquad a=X^2-\frac1{k^2},
\]
together with \(X^2=hY\) and \eqref{h}, we obtain
\begin{align}
 abc+1
 &=\bigl[X(\pi a+k)\bigr]^2 \nonumber\\
 &=\pi^2h^3Y(Y-1)^2 \nonumber\\
 &=\frac{91}{4}Y(Y-1)^2.
 \label{y914}
\end{align}
Hence,
\[
 abc
 =\frac{91}{4}
 \left(Y^3-2Y^2+Y-\frac4{91}\right).
\]
On the other hand,
$ABC=(pq)^2h^3=\pi^2h^3=\frac{91}{4},$
so
\begin{equation}\label{abc1}
 abc=\frac{91}{4}(Y-\alpha)(Y-\beta)(Y-\gamma).
\end{equation}
Equations \eqref{y914} and \eqref{abc1} yield
$(Y-\alpha)(Y-\beta)(Y-\gamma)
 =Y^3-2Y^2+Y-\frac4{91}.$
Thus \(\alpha,\beta,\gamma\) are the roots of the polynomial \(f\) in
\eqref{eq:Jurasic-polynomial}.
Since
$\disc(f)=\frac{1024}{8281}
         =\left(\frac{32}{91}\right)^2>0,$
the roots \(\alpha,\beta,\gamma\) are pairwise distinct. After
simultaneously permuting the pairs
$ (A,\alpha),$ $ (B,\beta)$, $(C,\gamma),$
if necessary, we may assume that
\[
 \alpha<\beta<\gamma.
\]

By \cite[Lemma~3.1]{Jurasic2026}, if $\xi \neq \eta$ and $D > 0$, then $1 + D(Y - \xi)(Y - \eta)$ is the square of a linear polynomial in $Y$ if and only if \begin{equation}\label{eq:elementary-D-condition}
 D=\frac4{(\eta - \xi)^2}.
\end{equation}
Since $ ab+1=AB(Y-\alpha)(Y-\beta)+1$
is a square, \eqref{eq:elementary-D-condition} gives
\begin{equation}\label{eq:AB-Jurasic}
 AB=\frac4{(\beta-\alpha)^2}.
\end{equation}
Similarly, the square conditions for \(ac+1\) and \(bc+1\) give
\begin{equation}\label{eq:AC-BC-Jurasic}
 AC=\frac4{(\gamma-\alpha)^2},
 \qquad
 BC=\frac4{(\gamma-\beta)^2}.
\end{equation}
From \eqref{eq:AB-Jurasic} and \eqref{eq:AC-BC-Jurasic}, we obtain
$ A^2=\frac{(AB)(AC)}{BC}
 =\frac{4(\gamma-\beta)^2}
 {(\beta-\alpha)^2(\gamma-\alpha)^2}.$
Since \(A>0\),
\begin{equation}\label{eq:A-Jurasic}
 A=
 \frac{2(\gamma-\beta)}
 {(\beta-\alpha)(\gamma-\alpha)}.
\end{equation}
Likewise,
\begin{equation}\label{eq:BC-Jurasic}
 B=
 \frac{2(\gamma-\alpha)}
 {(\beta-\alpha)(\gamma-\beta)},
 \qquad
 C=
 \frac{2(\beta-\alpha)}
 {(\gamma-\alpha)(\gamma-\beta)}.
\end{equation}
Therefore, the identities \eqref{eq:AB-Jurasic} and
\eqref{eq:AC-BC-Jurasic} determine \(A,B,C\) uniquely in terms of
\(\alpha,\beta,\gamma\), these are the formulas \eqref{eq:Jurasic-ABC}, and in the variable \(z\)
the triple \(\{a,b,c\}\) takes the form \eqref{eq:Jurasic-triple}.

The cubic \eqref{eq:Jurasic-polynomial}, its ordered roots
\(\alpha<\beta<\gamma\), and the coefficients \eqref{eq:A-Jurasic} and
\eqref{eq:BC-Jurasic} are exactly the polynomial and the leading coefficients
appearing in Jurasi\'c's construction \cite[Theorem~3.2]{Jurasic2026}, so
\eqref{eq:Jurasic-triple} is the triple constructed there.
Finally, none of \(\alpha,\beta,\gamma,A,B,C\) depends on the chosen root
\(k\) of \(G\), only the scaling factor \(h\) in \eqref{h} does. Hence, the
triples \eqref{eq:common-triple} obtained from the three real roots of \(G\)
are all equivalent to \eqref{eq:Jurasic-triple}, and therefore to each other.

\end{proof}

We can now state the main result of this section.

\begin{theorem}
\label{thm:quadratic-classification}
The four-square polynomial triples in \(\R[X]\) whose entries
all have degree two form a single equivalence class over \(\R\),
represented by the triple \eqref{eq:Jurasic-triple} constructed
by Jurasi\'c \cite[Theorem~3.2]{Jurasic2026}.
\end{theorem}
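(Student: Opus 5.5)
The theorem follows by chaining the lemmas of this section. The plan has two parts. First, every real quadratic four-square triple is equivalent to \eqref{eq:Jurasic-triple}. Second, that triple is itself a quadratic four-square triple in \(\R[X]\), so the class is nonempty. Because equivalence is an equivalence relation preserving degrees and the four-square property, this gives exactly one class.

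For the first part, let \(\{a,b,c\}\subset\R[X]\) be a four-square triple with all entries of degree two. By \Cref{lem:degrees}, each leading coefficient is a positive square in \(\R\), so each entry has the form \(P^2+\lambda\) with \(P\) linear and a well-defined center. \Cref{lem:center-patterns} rules out the patterns "exactly two equal centers" and "three distinct centers", so all three entries share a common center. \Cref{lem:common-center-family} then applies. After a real affine change of variable, and possibly swapping \(b\) and \(c\), the triple takes the normalized form \eqref{eq:common-triple} for some \(k\in\R\) with \(G(k)=0\). By \Cref{lem:G-roots}, \(k\) is one of \(k_1,k_2,k_3\). \Cref{prop:identification-Jurasic} now provides the further real substitution \(X=\sqrt{h}\,z\), with \(h>0\), and a permutation carrying this triple to \eqref{eq:Jurasic-triple}. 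Composing these affine substitutions and permutations shows that the original triple is equivalent over \(\R\) to \eqref{eq:Jurasic-triple}.

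For the second part, I use the sufficiency direction of \Cref{lem:common-center-family}. Since \(G\) has a real root, say \(k_1\), the triple \eqref{eq:common-triple} built from it consists of distinct degree-two polynomials satisfying \eqref{eq:four-squares}, with the explicit roots \eqref{eq:common-four-roots}. By \Cref{prop:identification-Jurasic}, it is equivalent to \eqref{eq:Jurasic-triple}. That triple is therefore a genuine quadratic four-square triple and represents the class, in agreement with \cite[Theorem~3.2]{Jurasic2026}.

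The only care needed is bookkeeping rather than a real obstacle. The "if necessary, swap \(b\) and \(c\)" step in \Cref{lem:common-center-family} and the ordering permutation in \Cref{prop:identification-Jurasic} are both permutations. The normalizations are compositions of invertible real affine maps, so the composite is again of the allowed form. I must also check that the normalization in \Cref{lem:common-center-family} may use whichever entry is placed first: this holds because \Cref{lem:degrees} makes every leading coefficient a square, so I can simply take the given \(a\).
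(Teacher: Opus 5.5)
Your proposal is correct and follows essentially the same route as the paper's proof. Existence comes from the sufficiency direction of \Cref{lem:common-center-family} for a real root of $G$, combined with \Cref{prop:identification-Jurasic} (or directly from \cite[Theorem~3.2]{Jurasic2026}), and uniqueness comes from chaining \Cref{lem:center-patterns}, \Cref{lem:common-center-family}, and \Cref{prop:identification-Jurasic} with transitivity of the equivalence relation.
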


\begin{proof}
Existence follows from \cite[Theorem~3.2]{Jurasic2026}. It also follows from
our results: since \(G\) has a real root \(k\) by \Cref{lem:G-roots}, the
sufficiency part of \Cref{lem:common-center-family} shows that
\eqref{eq:common-triple} is then a four-square polynomial triple in
\(\R[X]\), and \Cref{prop:identification-Jurasic} identifies it with
\eqref{eq:Jurasic-triple}.

Let \(\{a,b,c\}\subset\R[X]\) be a four-square polynomial
triple with three quadratic entries. By \Cref{lem:center-patterns}, the
cases in which exactly two entries have a common center or the three entries
have pairwise distinct centers are impossible, so the three entries have a
common center. By \Cref{lem:common-center-family}, the triple \(\{a,b,c\}\) is
equivalent over \(\R\) to the triple \eqref{eq:common-triple} for some real
root \(k\) of \(G\), and by \Cref{prop:identification-Jurasic} the latter is
equivalent over \(\R\) to \eqref{eq:Jurasic-triple}. Since equivalence over
\(\R\) is an equivalence relation, the triple \(\{a,b,c\}\) is equivalent to
\eqref{eq:Jurasic-triple}, which proves the theorem.
\end{proof}

\begin{remark}
\label{rem:three-roots}
Although the cubic \(G\) in \eqref{eq:common-G} has three
distinct real roots, the three corresponding triples \eqref{eq:common-triple} are equivalent over $\mathbb{R}$ by \Cref{thm:quadratic-classification}. The reason is that the normal form
\eqref{eq:common-triple} distinguishes one entry, \(a\), which is normalized
to be monic with center \(0\), while the order of the remaining two entries
is fixed by \(p-q=1\). By \Cref{lem:common-center-family}, the root \(k\) is
uniquely determined once this choice is made, so the three roots of \(G\)
correspond to the three possible choices of the distinguished entry. This can
also be seen directly. The numbers \(1/k_1^2,1/k_2^2,1/k_3^2\) are the roots of the polynomial 
\[
 E(T)=256T^3-2336T^2+5073T-256,
\]
and a direct computation shows that, for \(p,q\) given by
\eqref{eq:common-parameters} and \eqref{eq:common-pq}, we have 
\[
 E\Big(\frac{p^2}{k^2}-2p\Big)\equiv0 \pmod{G(k)},\qquad
 E\Big(\frac{q^2}{k^2}-2q\Big)\equiv0
 \pmod{G(k)}.
\]
Thus the constant terms \(-p^2/k^2+2p\) and \(-q^2/k^2+2q\) of \(b\) and
\(c\) in \eqref{eq:common-triple}, respectively, are \(-1/k'^2\) and \(-1/k''^2\), where
\(k',k''\) are the two remaining roots of \(G\). Since constant terms are
unchanged by the scaling \(X\mapsto X/p\) that makes \(b\) monic, the entry
\(b\) plays the role of \(a\) for the root \(k'\), and similarly for \(c\).
Consequently, in the normalization of \Cref{lem:common-center-family}, the
three entries of the unique triple have constant terms
\(-1/k_1^2\), \(-1/k_2^2\), and \(-1/k_3^2\), a description in which no entry
is distinguished.
\end{remark}
\medskip

The parametrization also yields the nonexistence of quadratic
four-square polynomial triples over \(\Q\).

\begin{corollary}
\label{cor:no-rational-quadratic}
There is no four-square polynomial triple with three quadratic
entries in \(\Q[X]\), and hence none in \(\Z[X]\).
\end{corollary}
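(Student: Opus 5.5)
Suppose, for contradiction, that \(\{a,b,c\}\subset\Q[X]\) is a four-square polynomial triple whose three entries are quadratic. Since \(\Q[X]\subset\R[X]\), it is also a real quadratic triple, so \Cref{lem:center-patterns} applies with \(K=\Q\). The triple therefore has a common center, and this center is rational: it is the zero of the linear part of \(a\) after completing the square over \(\Q\). We then apply \Cref{lem:common-center-family} with \(K=\Q\).

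The key point is that every step of that normalization is carried out over \(\Q\). By \Cref{lem:degrees}, \(\lc(a)\) is a nonzero square in \(\Q\), so the affine change of variable \(X\mapsto\alpha X+\beta\) that makes \(a=X^2+\lambda\) has \(\alpha\in\Q^*\) and \(\beta\in\Q\). The parameters \(p,q\) produced by \Cref{lem:quadratic-pair} lie in \(\Q^*\). The polynomial \(u\) lies in \(\Q[X]\), so its coefficients \(\pi\) and \(\beta\) are rational. Hence \(k=\beta-\pi\lambda\in\Q\).

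The necessity part of \Cref{lem:common-center-family} then shows that this rational \(k\) satisfies \(G(k)=0\). This contradicts \Cref{lem:G-roots}: \(G\) is an irreducible cubic over \(\Q\), so it has no rational root. Consequently no such triple exists in \(\Q[X]\). Since \(\Z[X]\subset\Q[X]\) and the four-square conditions persist under this inclusion, none exists in \(\Z[X]\) either.

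The only point that needs care is confirming that the normalization in \Cref{lem:common-center-family} is \(K\)-rational; the lemma is already stated for \(K=\Q\), so this amounts to checking that \(k\in K\), which the computation above does. No other obstacle is expected.
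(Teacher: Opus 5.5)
Your proposal is correct and follows the paper's own argument. \Cref{lem:center-patterns} gives a common center, \Cref{lem:common-center-family} with $K=\Q$ then produces a rational root $k$ of $G$, which contradicts the irreducibility in \Cref{lem:G-roots}, and $\Z[X]\subset\Q[X]$ finishes the proof. Your explicit check that $k=\beta-\pi\lambda\in\Q$ just spells out what ``applied with $K=\Q$'' means in the paper; the detour through $\R[X]$ is unnecessary, since \Cref{lem:center-patterns} is stated for $K=\Q$ directly.
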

\begin{proof}
Suppose that such a triple exists in \(\Q[X]\). By
\Cref{lem:center-patterns}, its three entries have a common center.
Hence, by \Cref{lem:common-center-family} applied with \(K=\Q\),
there exists \(k\in\Q\) satisfying \eqref{eq:common-G}.
This contradicts the irreducibility of \(G\) over \(\Q\)
established in \Cref{lem:G-roots}. The assertion over \(\Z[X]\)
follows from \(\Z[X]\subset\Q[X]\).
\end{proof}

\section{Degree restrictions and factorization systems}
\label{sec:degree-gap}

In this section, we study further restrictions on the degree patterns
and structure of four-square polynomial triples. We first derive
a degree gap from the regular-extension identities and the fourth
square condition. For the degree pattern \((2n,2m,2m)\) with
\(1\le n\le m\) , we then
obtain a factor-allocation system, which we refine in the equal-degree
case \((2n,2n,2n)\) using a composition obstruction. We conclude
with an application to a regular parametrized family.

\subsection{Regular extensions and degree restrictions}
\label{subsec:regularity-degree-gap}

Regular extensions turn the square identities into degree restrictions.
We first obtain a criterion for regularity, then exclude its boundary
case using the fourth square condition.

For a polynomial Diophantine triple \(\{a,b,c\}\), define
\[
 d_{\pm}=a+b+c+2abc\pm2rst.
\]
Direct expansion gives
\begin{equation*}
 ad_{\pm}+1=(at\pm rs)^2,\ \
 bd_{\pm}+1=(bs\pm rt)^2,\ \
 cd_{\pm}+1=(cr\pm st)^2,
\end{equation*}
and
\begin{equation}\label{eq:dproduct}
 d_+d_-=(c-a-b-2r)(c-a-b+2r).
\end{equation}

\begin{lemma}
\label{lem:regularity}
Let \(K\in\{\Q,\R\}\), and let \(a,b,c\in K[X]\) satisfy
\eqref{eq:four-squares} with
$
0<\deg a\le \deg b\le \deg c.
$
If \(\deg a+\deg b>\deg c\), then, after changing the sign of \(r\)
if necessary, \eqref{eq:regular} holds.
\end{lemma}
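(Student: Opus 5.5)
We will use the product identity \eqref{eq:dproduct} together with a degree count on $d_\pm$. Write $\deg a=2n$, $\deg b=2m$, $\deg c=2l$; these are even by \Cref{lem:degrees}, and the hypothesis reads $l<n+m$. Since $ab+1=r^2$, $ac+1=s^2$, $bc+1=t^2$, we have $\deg r=n+m$, $\deg s=n+l$, $\deg t=m+l$. All leading coefficients of $a,b,c$ are positive by \Cref{lem:degrees}, so $\lc(r)^2=\lc(a)\lc(b)$ and similarly for $s,t$. Then $\lc(rst)^2=\lc(abc)^2$. This means that, after choosing signs of $r,s,t$ so that $\lc(rst)=\lc(abc)$, the polynomial $d_+=a+b+c+2abc+2rst$ has degree exactly $2(n+m+l)$ with leading coefficient $4\lc(abc)$. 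Changing the sign of $r$ only swaps $d_+$ and $d_-$, so we may fix this convention.

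Next we bound $\deg d_-$. We have
\[
 d_+d_-=(c-a-b)^2-4r^2=(c-a-b)^2-4ab-4 .
\]
Its degree is at most $\max(2\cdot 2l,\,2(n+m))$. Since $l<n+m$ gives $4l<2(n+m)+2l$, and $2(n+m)<2(n+m+l)$, the right-hand side has degree strictly less than $\deg d_+=2(n+m+l)$. Hence $d_-$ must be the zero polynomial, because a nonzero $d_-$ would force $\deg(d_+d_-)\ge \deg d_+$. So the right-hand side of \eqref{eq:dproduct} vanishes:
\[
 (c-a-b-2r)(c-a-b+2r)=0 .
\]
Since $K[X]$ is an integral domain, $c=a+b+2r$ or $c=a+b-2r$. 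In the second case, replacing $r$ by $-r$ gives \eqref{eq:regular}.

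The main obstacle is the sign bookkeeping: we must verify that some choice of signs makes $d_+$ attain full degree with no cancellation between $2abc$ and $2rst$. This works because the product $\lc(r)\lc(s)\lc(t)$ equals $\pm\lc(abc)$, so exactly one of $d_\pm$ has leading term $4\lc(abc)X^{2(n+m+l)}$. The argument does not need the fourth condition $abc+1=u^2$; only the pairwise squares and positivity of leading coefficients are used.
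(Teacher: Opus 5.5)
Your proof is correct and is essentially the paper's argument: choose signs so that $2abc$ and $2rst$ reinforce in $d_+$, which gives $\deg d_+=\deg a+\deg b+\deg c$. The right side of \eqref{eq:dproduct} then has strictly smaller degree (the paper bounds it by $2\deg c$ via $\deg r\le\deg c$, you bound it by $\max(4l,2(n+m))$), which forces $d_-=0$; your normalization $\lc(rst)=\lc(abc)$ plays the role of the paper's positive leading coefficients and, as you note, only needs $\lc(rst)^2=\lc(abc)^2$ from the pairwise conditions, so the appeal to \Cref{lem:degrees} is not actually essential.
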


\begin{proof}
Assume \(\deg a+\deg b>\deg c\).
Choose the signs of \(r,s,t\) so that their leading coefficients
are positive. By \Cref{lem:degrees}, the leading coefficients of
\(a,b,c\) are positive as well. The leading terms of
\(2abc\) and \(2rst\) in \(d_+\) add, so
\[
 \deg d_+=\deg a+\deg b+\deg c.
\]
Since \(\deg r=\frac{\deg a+\deg b}{2}\le \deg c\), the right side of
\eqref{eq:dproduct} has degree at most \(2\deg c\).
Since \(\deg a+\deg b>\deg c\), a nonzero \(d_-\) would
make the left side have degree \(>2\deg c\).
Thus \(d_-=0\), and \eqref{eq:dproduct} gives
\(c=a+b+2r\) or \(c=a+b-2r\). In the latter case replace \(r\)
by \(-r\), which preserves \(r^2=ab+1\) and gives
\eqref{eq:regular}. 
\end{proof}

\begin{remark}
 \Cref{lem:regularity} shows that every four-square polynomial
triple over \(K\in\{\Q,\R\}\) with
\(0<\deg a\leq\deg b\leq\deg c\) satisfying
\(\deg a+\deg b>\deg c\) is regular.
In particular, this applies to every triple with degree pattern
\((2n,2m,2m)\), \(1\le n\le m\). Hence,
the equal-degree pattern \((2n,2n,2n)\) is included. 
\end{remark}

The preceding lemma treats the strict inequality
$\deg a+\deg b>\deg c$. We now exclude the boundary case
$\deg c=\deg a+\deg b$. 

\begin{proposition}\label{prop:boundary-degree-obstruction}
Let \(K\in\{\Q,\R\}\), and let \(a,b,c\in K[X]\)
be a four-square polynomial triple with nonconstant entries.
Then
\[
 \deg c\ne\deg a+\deg b.
\]
In particular, the degree patterns \((2n,2m,2(n+m))\) are impossible
for all positive integers \(n,m\).
\end{proposition}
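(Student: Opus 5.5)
Order the entries so that \(0<\deg a\le\deg b\le\deg c\), and suppose \(\deg c=\deg a+\deg b\). I would run the argument of \Cref{lem:regularity} in this boundary case, which no longer forces regularity, and then use the fourth square \(abc+1=u^2\).

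First, choose \(r,s,t\) with positive leading coefficients. As in \Cref{lem:regularity}, \(\deg d_+=\deg a+\deg b+\deg c=2\deg c\). The right side of \eqref{eq:dproduct} has degree at most \(2\deg c\). Hence \(d_-\) is a constant \(\delta\in K\).

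If \(\delta=0\), then \(c=a+b\pm2r\). This has degree at most \(\max(\deg b,(\deg a+\deg b)/2)=\deg b<\deg c\), a contradiction. So \(\delta\neq0\).

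Next, write \(2rst=a+b+c+2abc-\delta\) and square it, using \((rst)^2=(ab+1)(ac+1)(bc+1)\). The terms \(4a^2b^2c^2\) and \(4abc(a+b+c)\) cancel, and rearranging gives
\[
 (c-a-b-\delta)^2=4\delta abc+4ab+4+4\delta(a+b).
\]
Comparing leading coefficients, with all of them positive by \Cref{lem:degrees}, forces \(\delta>0\). Now substitute \(abc=u^2-1\):
\[
 \bigl(c-a-b-\delta-2\sqrt{\delta}\,u\bigr)\bigl(c-a-b-\delta+2\sqrt{\delta}\,u\bigr)=4(ab+1)+4\delta(a+b-1).
\]
This factorization is over \(\R\), which is harmless even for \(K=\Q\). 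The right side has degree exactly \(\deg a+\deg b=\deg c\), and \(\deg u=\deg c\).

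The two factors sum to \(2(c-a-b-\delta)\), which has degree \(\deg c\). Each factor has degree at most \(\deg c\), and their degrees add to \(\deg c\). So one factor has degree \(\deg c\) and the other is a nonzero constant \(\kappa\). This gives
\[
 c-a-b-\delta=\mp2\sqrt{\delta}\,u+\kappa .
\]
Feeding this back into the product yields \(\kappa\bigl(2(c-a-b-\delta)-\kappa\bigr)=4(ab+1)+4\delta(a+b-1)\). Hence \(c\) is explicit:
\[
 c=\frac{2}{\kappa}(a+\delta)(b+\delta)+\text{const},
\]
and simultaneously \(u\) is an explicit polynomial in \(a,b\).

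The main obstacle is the final step. Here I would substitute this expression for \(c\) and the resulting expression for \(u\) back into \(u^2=abc+1\), viewed as an identity in \(a,b\). I would compare the top-degree parts, and then reduce modulo \(a\) and modulo \(b\) using \(u^2\equiv1\). The aim is to extract incompatible constraints on \(\delta,\kappa\). I expect the comparison of the \(a^2b^2\)-type terms to give \(\kappa^2\) in terms of \(\delta\), with the lower-order terms then contradicting \(\delta>0\).

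If this elimination does not close directly, I would supplement it with the conditions \(ac+1=s^2\) and \(bc+1=t^2\). With \(c\) now a quadratic expression in \(a\) and \(b\), these become strong Pell-type constraints. The aim is to bound degrees as in \Cref{lem:quadratic-pair} and reach a contradiction.
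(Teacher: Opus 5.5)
Your reduction is sound up to the point where one factor is a nonzero constant. You correctly show that $d_-=\delta$ is a positive constant, your identity $(c-a-b-\delta)^2=4\delta abc+4ab+4+4\delta(a+b)$ is correct, and the degree argument for the two factors is fine. This is the paper's key step in another form: with $v=cr-st$ one has $2rv=c-a-b+\delta$, so your constant factor $c-a-b-\delta-2\sqrt\delta\,u$ equals $-2(\sqrt\delta\,u-rv)-2\delta$.

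The gap is that you stop exactly where the contradiction has to be produced. The last step is only announced, with a vague fallback, and two things in the announcement are off. First, your formula for $c$ drops the $a+b$ term. Normalizing the sign of $u$ so that $c-a-b-\delta-2\sqrt\delta\,u=\kappa$, one actually gets
\[
c=\frac{2}{\kappa}\,ab+\Bigl(1+\frac{2\delta}{\kappa}\Bigr)(a+b)+\delta+\frac{\kappa}{2}+\frac{2(1-\delta)}{\kappa},
\qquad
u=\frac{1}{\kappa\sqrt{\delta}}\Bigl(ab+\delta(a+b)+1-\delta-\frac{\kappa^{2}}{4}\Bigr).
\]
Second, $u^2=abc+1$ is not an identity in independent variables $a,b$, since any two polynomials in $X$ are algebraically dependent. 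So coefficients of ``$a^2b^2$-type'' terms can only be compared through $X$-degrees.

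That comparison does close the argument, and you should carry it out. After substitution, $u^2-abc-1$ is a real linear combination of $a^2b^2$, $ab(a+b)$, $(a+b)^2$, $ab$, $a+b$, $1$. Their $X$-degrees satisfy $2\deg c>\deg c+\deg b>2\deg b\ge\deg c>\deg b>0$. Moreover, $ab(a+b)$ and $(a+b)^2$ have nonzero leading coefficients because $\lc(a),\lc(b)>0$.

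The argument then runs as follows.
\begin{enumerate}
\item The $a^2b^2$ coefficient gives $\kappa\delta=\frac12$.
\item The $ab(a+b)$ coefficient then gives $\kappa=2\delta$.
\item Hence $\delta=\frac12$ and $\kappa=1$; here $\delta>0$ only fixes the sign.
\item Then $c=2(a+1)(b+1)$ and $u=\sqrt2\,(a+\frac12)(b+\frac12)$.
\item What remains of $u^2=abc+1$ is $(2a+1)^2+(2b+1)^2=9$. This is impossible for nonconstant real $a,b$.
\end{enumerate}
So the contradiction comes from a sum of squares, not from $\delta>0$ as you predicted, and the Pell-type fallback via $ac+1$, $bc+1$ is unnecessary.

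Completed this way, your elimination in terms of $a,b$ is a legitimate and slightly more direct alternative to the paper's ending. The paper sets $\sqrt\delta\,u-rv=\mu$ and works with the conic $r^2+2\mu rv+v^2=\delta+1-\mu^2$ in $r,v$. It excludes $\mu^2\ne1$ and $\mu=1$, deduces $r-v$ constant and $\delta=\frac12$, and finally reaches $(a-b)^2$ equal to a polynomial with negative leading coefficient.
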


\begin{proof}
It suffices to work over $\R$. Suppose that
$\deg a\le\deg b$ and $\deg c=\deg a+\deg b$.
By \Cref{lem:degrees}, all three leading coefficients are positive.
Choose the signs of $r,s,t,u$ in \eqref{eq:four-squares}
so that all four leading coefficients are positive. 
Put
\[
e=d_-=a+b+c+2abc-2rst.
\]
We first determine its degree directly. Identity
\eqref{eq:dproduct} gives
\[
d_+e=(c-a-b)^2-4(ab+1).
\]
Since $\deg c>\deg a$ and $\deg c>\deg b$, the right-hand side has degree $2\deg c$ and leading
coefficient $\lc(c)^2$, whereas $d_+$ has degree $2\deg c$ and leading
coefficient $4\lc(a)\lc(b)\lc(c)$.
The right-hand side is nonzero, so $e\ne0$.
Thus $e$ is constant, and comparison of leading coefficients gives
\begin{equation}\label{eq:boundary-positive-e}
e=\frac{\lc(c)}{4\lc(a)\lc(b)}>0.
\end{equation}

Put $v=cr-st$. The extension identities preceding
\Cref{lem:regularity}, together with $r^2=ab+1$, give
\begin{equation}\label{eq:boundary-rv}
v^2=ce+1,\qquad 2rv=2c(ab+1)-2rst=c-a-b+e.
\end{equation}
Thus $\deg r=\deg v=\frac{\deg c}{2}$.
Since
$2\lc(r)\lc(v)=\lc(c)>0$ and $\lc(r)>0$, it follows that $\lc(v)>0$.

We have $r^2v^2=(ab+1)(ce+1)=eabc+r^2+v^2-1.$ Together with \(u^2=abc+1\), this gives
\begin{equation}\label{eq:boundary-difference}
 (\sqrt e\,u-rv)(\sqrt e\,u+rv)
 =eu^2-r^2v^2=e+1-r^2-v^2.
\end{equation}
The right-hand side has degree $\deg c$. The second factor on the left
also has degree $\deg c$. 
 Therefore $\sqrt e\,u-rv=\kappa$ is a nonzero real
constant, and
\begin{equation}\label{eq:boundary-conic}
r^2+2\kappa rv+v^2=e+1-\kappa^2.
\end{equation}

If $\kappa^2\ne1$, the left-hand side factors over $\C$ as
$(r+\alpha v)(r+\beta v)$, with $\alpha\ne\beta$.
If the right-hand side is nonzero, both factors are constant polynomials in \(\C[X]\) forcing $(\alpha-\beta)v\in\C$, that is,
 $v$ is a constant. If the right-hand side is zero,
one factor vanishes, so  $r=\lambda v$, where $\lambda$ is a real constant since $r$ and $v$ are
real polynomials.
In this case \eqref{eq:boundary-rv} yields
\[
a+b=\left(\frac1e-2\lambda\right)v^2+e-\frac1e.
\]
Since $\deg(a+b)=\deg b<\deg c=\deg v^2$, the coefficient of $v^2$ must
vanish, making $a+b$ constant. Both alternatives are impossible. Hence $\kappa^2=1$. 

The case $\kappa=1$ would give
$(r+v)^2=e$, contrary to the positive leading coefficients of $r,v$.
Thus $\kappa=-1$ and $r=v+\delta,$ $\delta\in\R$, $\delta^2=e.$
Equation \eqref{eq:boundary-rv} now gives
$a+b=\left(\frac1e-2\right)v^2-2\delta v+e-\frac1e.$
The same degree comparison as earlier forces $e=1/2$. Consequently,
\[
a+b=-2\delta v-\frac32,\qquad
ab=(v+\delta)^2-1,\qquad \delta^2=\frac12.
\]
It follows that
\[
(a-b)^2=-2v^2-2\delta v+\frac{17}{4},
\]
where right-hand side has negative leading coefficient. This cannot
be a square in $\R[X]$, completing the proof.
\end{proof}

\begin{corollary}\label{cor:general-degree-gap}
For a four-square polynomial triple over \(K\in\{\Q,\R\}\)
with \(0<\deg a\le\deg b\le\deg c\), one has
\[
 \deg c=\deg b
 \qquad\text{or}\qquad
 \deg c>\deg a+\deg b.
\]
Equivalently, a degree pattern \((2n,2m,2l)\) with
\(1\le n\le m\le l\) must satisfy
\(l=m\) or \(l>n+m\).
\end{corollary}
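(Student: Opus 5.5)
The plan is a case split on whether $\deg c>\deg a+\deg b$, combining \Cref{lem:regularity} with \Cref{prop:boundary-degree-obstruction}. If $\deg c>\deg a+\deg b$, there is nothing to prove. If $\deg c=\deg a+\deg b$, then \Cref{prop:boundary-degree-obstruction} gives a contradiction directly. So the only case requiring work is $\deg c<\deg a+\deg b$.

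In that case \Cref{lem:regularity} applies. After changing the sign of $r$ if necessary, we get $c=a+b+2r$ with $r^2=ab+1$. The goal is to show that $\deg c=\deg b$.

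Choose $r$ with the sign produced by \Cref{lem:regularity}. Then $\deg r=(\deg a+\deg b)/2\le\deg b$, since $\deg a\le\deg b$. Hence every term of $a+b+2r$ has degree at most $\deg b$, and so $\deg c\le\deg b$. Combined with the hypothesis $\deg b\le\deg c$, this gives $\deg c=\deg b$.

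No cancellation analysis is needed, because only an upper bound on $\deg c$ is used. This step is therefore routine; the genuine obstacle, the boundary case, was already handled in \Cref{prop:boundary-degree-obstruction}.

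For the equivalent reformulation, write the degrees as $(2n,2m,2l)$; this is legitimate by \Cref{lem:degrees}, which says all degrees are even. Then the conditions $\deg c=\deg b$ and $\deg c>\deg a+\deg b$ translate directly into $l=m$ and $l>n+m$.
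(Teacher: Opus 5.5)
Your proposal is correct and follows the paper's proof essentially line for line. The boundary case $\deg c=\deg a+\deg b$ is excluded by \Cref{prop:boundary-degree-obstruction}; when $\deg c<\deg a+\deg b$, \Cref{lem:regularity} gives $c=a+b+2r$ with $\deg r=(\deg a+\deg b)/2\le\deg b$, which forces $\deg c=\deg b$, and your appeal to \Cref{lem:degrees} for the even-degree reformulation makes explicit a step the paper leaves implicit.
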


\begin{proof} 
The equality $\deg c=\deg a+\deg b$ is excluded by
\Cref{prop:boundary-degree-obstruction}. If $\deg c<\deg a+\deg b$,
\Cref{lem:regularity} allows us to choose the sign of the root
$r^2=ab+1$ so that $c=a+b+2r$.
Since $\deg r=\frac{\deg a+\deg b}{2}\le \deg b$, it follows that $\deg c\le \deg b$,
and hence $\deg c=\deg b$.
\end{proof}
\medskip
In particular, \((2,2,6)\) is the smallest
degree pattern with  \(\deg c>\deg a+\deg b\) that is not excluded by Corollary \ref{cor:general-degree-gap}. Whether it
can be realized remains open.

\subsection{Factor allocation with two entries of maximum degree}
\label{subsec:two-largest}

The first alternative in \Cref{cor:general-degree-gap} leads to
a factor-allocation system that will be specialized to equal degrees
in Subsection~\ref{sec:equal-degrees}. 

\begin{proposition}\label{prop:two-largest-split}
Let $K\in\{\Q,\R\}$, and let $n,m$ be integers with
$1\le n\le m$. A four-square polynomial triple in $K[X]$ with $\deg a=2n$ and $\deg b=\deg c=2m$ admits, after
possibly interchanging $b,c$ and choosing a square root $r$ of $ab+1$,
nonzero polynomials $P,Q,S,T\in K[X]$
such that
\begin{equation}\label{eq:two-largest-allocation}
a=PQ,\qquad b=ST,\qquad r-1=PS,\qquad r+1=QT,
\end{equation}
\begin{equation}\label{eq:two-largest-system}
\begin{gathered}
QT-PS=2,\qquad c=(P+T)(Q+S),\\
PQST(P+T)(Q+S)+1=U^2
\end{gathered}
\end{equation}
for some $U\in K[X]$. Their degrees have the form
\begin{equation}\label{eq:two-largest-degrees}
(\deg P,\deg Q,\deg S,\deg T)
=(i,2n-i,n+m-i,m-n+i),\qquad 0\le i\le n.
\end{equation}
If $n<m$, then
\[
\deg(c-b)=n+m,
\]
and both $b$ and $c$ admit a factorization into relatively prime
nonconstant factors of degrees $j,2m-j$, where
$m-n\le j\le m$.
\end{proposition}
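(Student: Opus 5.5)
By \Cref{lem:regularity}, the triple is regular: the degrees satisfy $\deg a+\deg b=2n+2m>2m=\deg c$. So after fixing the sign of $r$ we have $c=a+b+2r$ with $r^2=ab+1$ and $\deg r=n+m$. Then $ab=(r-1)(r+1)$. Any common divisor of $r-1$ and $r+1$ divides $2$, so these two factors are coprime.

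Put $P=\gcd(a,r-1)$ and $Q=a/P$. Since $Q$ is coprime to $(r-1)/P$ and $a\mid(r-1)(r+1)$, we get $Q\mid r+1$. Define $S=(r-1)/P$ and $T=(r+1)/Q$. Then $PQST=(r-1)(r+1)=ab$, so $b=ST$, and the relations \eqref{eq:two-largest-allocation} hold. Subtracting gives $QT-PS=2$. Expanding,
\[
(P+T)(Q+S)=PQ+ST+PS+QT=a+b+2r=c.
\]
Finally, $abc+1=U^2$ is just the fourth square condition rewritten. This yields \eqref{eq:two-largest-system}.

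For the degrees, set $i=\deg P$. From $\deg(PQ)=2n$ and $\deg(PS)=\deg(QT)=n+m$ we read off the tuple in \eqref{eq:two-largest-degrees}, a priori only with $0\le i\le 2n$. The main point is to force $i\le n$, and this is where the interchange of $b$ and $c$ enters.

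From $c=a+b+2r$ one computes $ac+1=(a+r)^2$ and $b=a+c-2(a+r)$. So with $b,c$ swapped, the regular relation holds with root $r'=-(a+r)$. Then $\gcd(a,r'-1)=\gcd(a,a+r+1)=\gcd(a,r+1)$, and this is $Q$ up to a constant, since $P$ is coprime to $r+1$. Hence the swap replaces $i$ by $2n-i$. Choosing whichever of the two orderings gives $\deg P\le n$ gives $0\le i\le n$. I expect this normalization to be the only delicate step; the one thing to check is that the new root still satisfies the regularity relation with the correct sign.

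Now suppose $n<m$. Then $c-b=a+2r$, and $\deg r=n+m>2n=\deg a$, so $\deg(c-b)=n+m$.

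$S$ and $T$ are coprime, since a common factor would divide $QT-PS=2$. They are nonconstant, of degrees $n+m-i\ge m$ and $j=m-n+i\ge1$.

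For $c$, use the identity $T(Q+S)-S(P+T)=QT-PS=2$; it shows that $P+T$ and $Q+S$ are coprime. Since $m-n+i>i$ and $n+m-i>2n-i$, there is no cancellation of leading terms. Hence $\deg(P+T)=m-n+i=j$ and $\deg(Q+S)=n+m-i=2m-j$, with $m-n\le j\le m$.
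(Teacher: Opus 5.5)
Your proposal is correct and follows the paper's proof essentially step by step. It uses the same allocation $P=\gcd(a,r-1)$, $Q=a/P$, $S=(r-1)/P$, $T=(r+1)/Q$, the same interchange of $b,c$ with new root $-(a+r)$ sending $i$ to $2n-i$, and the same degree and coprimality arguments for $n<m$; your explicit identity $T(Q+S)-S(P+T)=QT-PS=2$ is just a cleaner way of writing the paper's ``reduction modulo a common divisor'' step.
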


\begin{proof}
By \Cref{lem:regularity}, choose the sign of the square root $r$ of 
$ab+1$ so that $c=a+b+2r$.
Since $(r+1)-(r-1)=2$ is a unit in $K[X]$, the factors
$r-1,r+1$ are relatively prime. Put
\[
P=\gcd(a,r-1),\qquad Q=a/P,\qquad
S=(r-1)/P,\qquad T=(r+1)/Q.
\]
The identity $(r-1)(r+1)=ab$ implies that every complete
irreducible-power factor of $a$ divides exactly one of $r-1,r+1$.
Thus $Q\mid r+1$. All four expressions are
polynomials, and multiplication gives $ST=b$.
The remaining identities follow from
\[
QT-PS=2,\qquad
(P+T)(Q+S)=PQ+ST+PS+QT=a+b+2r.
\]
The last identity in \eqref{eq:two-largest-system} is the fourth
square condition.

Initially $0\le i=\deg P\le2n$. Since $\deg r=n+m$,
degree comparison gives the four degrees in
\eqref{eq:two-largest-degrees}. We may interchange $b,c$
because the four-square conditions are symmetric in the entries.
For the permuted triple take $\widetilde r=-a-r$ as the new
square root of $ac+1$. This preserves the chosen regularity sign:
\[
ac+1=\widetilde r^{\,2},\qquad b=a+c+2\widetilde r.
\]
Moreover, $\widetilde r-1\equiv-(r+1)\pmod a$. Thus this
operation exchanges the two allocations of the factors of $a$,
replacing $i$ by $2n-i$. We may therefore assume $i\le n$.

If $n<m$, then $\deg r>2n$, whence
$\deg(c-b)=\deg(a+2r)=n+m$. Also
\[
\deg T=m-n+i>\deg P,\qquad
\deg S=n+m-i>\deg Q.
\]
Consequently, the factors $P+T,Q+S$ of $c$ have the same
degrees as $T,S$, respectively. A common divisor of $S,T$
divides $QT-PS=2$. A common divisor of $P+T,Q+S$ also
divides $QT-PS$, by reduction modulo that divisor.
Both factorizations are therefore relatively prime, with
$j=m-n+i$.
\end{proof}

\begin{corollary}\label{cor:two-largest-reducible}
Under the hypotheses of \Cref{prop:two-largest-split}, if $n<m$,
each of $b,c$ has at least two distinct irreducible
factors over $K$. 
\end{corollary}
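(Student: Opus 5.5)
The plan is to read the corollary off the last assertion of \Cref{prop:two-largest-split}. When $n<m$, that proposition gives factorizations
\[
b=ST,\qquad c=(P+T)(Q+S),
\]
with relatively prime nonconstant factors of degrees $j$ and $2m-j$, where $m-n\le j\le m$. Both degrees are positive: $j\ge m-n\ge1$, and $2m-j\ge m\ge1$.

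The key step is the following elementary fact over the field $K$. If a polynomial $F=F_1F_2$ has $\gcd(F_1,F_2)=1$ and both $F_i$ are nonconstant, then $F$ has at least two distinct irreducible factors. To see this, take an irreducible $\pi_1\mid F_1$ and an irreducible $\pi_2\mid F_2$; these exist because each $F_i$ is nonconstant. If $\pi_1$ and $\pi_2$ were associates, then $\pi_1$ would be a common divisor of $F_1$ and $F_2$, contradicting coprimality. Hence $\pi_1$ and $\pi_2$ are non-associate irreducible divisors of $F$.

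Applying this fact to $b=ST$ gives the claim for $b$. Applying it to $c=(P+T)(Q+S)$ gives the claim for $c$. The coprimality needed in each case is supplied by the proposition: any common divisor of $S$ and $T$, or of $P+T$ and $Q+S$, divides $QT-PS=2$. The nonconstancy of each factor comes from the degree formulas $\deg T=m-n+i>0$ and $\deg S=n+m-i>0$. Moreover, the degrees of $P+T$ and $Q+S$ equal those of $T$ and $S$, because $\deg T>\deg P$ and $\deg S>\deg Q$ when $n<m$.

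The only point needing care is exactly this: the factors of $c$ must really be nonconstant, which is where the hypothesis $n<m$ is used. In the equal-degree case $n=m$ with $i=0$, $T$ could be constant, so the argument genuinely requires $n<m$. There is no substantive obstacle beyond quoting the proposition correctly. The conclusion holds over $K$ itself since all factors lie in $K[X]$.
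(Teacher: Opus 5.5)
Your proposal is correct and follows the paper's own proof, which simply applies the relatively prime nonconstant factorizations $b=ST$ and $c=(P+T)(Q+S)$ from \Cref{prop:two-largest-split}. You additionally spell out the elementary step that coprime nonconstant factors supply non-associate irreducible divisors, and you correctly check the degree bounds and the coprimality.
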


\begin{proof}
Apply the relatively prime nonconstant factorizations in
\Cref{prop:two-largest-split}.
\end{proof}

\subsection{Equal degrees: composition obstructions and factorization}
\label{sec:equal-degrees}
 
We refine the factor-allocation system for equal degrees. The first
step is a composition obstruction, which will exclude the rational
unsplit case. We then compare the real construction and derive the
remaining rational factor-degree profiles.

\begin{lemma}\label{lem:odd-degree-composition}
Let \(K\in\{\Q,\R\}\). Let \(A\in K[X]\) be nonconstant and let
 \(F\in K[Y]\setminus\{0\}\). If \(F(A)\) is a square in \(K[X]\), then, over an
algebraic closure \(\overline K\), the polynomial \(F\) has at most one
distinct root of odd multiplicity.

In particular, if \(\deg F\ge3\) is odd and \(F(A)\) is a square in
\(K[X]\), then \(F\) has a repeated root; equivalently,
\begin{equation}\label{eq:disc-zero}
 \disc(F)=0.
\end{equation}
\end{lemma}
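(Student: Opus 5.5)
Work over $\overline K$ and factor $F=c\prod_j (Y-\theta_j)^{e_j}$ with the $\theta_j$ pairwise distinct. Substituting $Y=A$ gives
\[
F(A)=c\prod_j (A-\theta_j)^{e_j}.
\]
The factors $A-\theta_j$ are pairwise coprime in $\overline K[X]$, since their differences $\theta_i-\theta_j$ are nonzero constants. Each factor is nonconstant because $A$ is nonconstant.

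The plan is to read off multiplicities at the roots of $F(A)$. Fix a root $x_0\in\overline K$ of $A-\theta_j$. By coprimality, $x_0$ is not a root of any other factor $A-\theta_i$. Hence the multiplicity of $x_0$ in $F(A)$ equals $e_j$ times the multiplicity of $x_0$ in $A-\theta_j$. A square in $K[X]$ is a square in $\overline K[X]$, so every root of $F(A)$ has even multiplicity. Therefore, whenever $e_j$ is odd, every root of $A-\theta_j$ has even multiplicity. Equivalently, $A-\theta_j$ is a constant times a square $h_j^2$ with $h_j\in\overline K[X]$ nonconstant.

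The main step is to show that two distinct values $\theta_1\ne\theta_2$ cannot both have this property. Suppose $A-\theta_1=c_1h_1^2$ and $A-\theta_2=c_2h_2^2$. Absorbing square roots of $c_1,c_2$ (available in $\overline K$), we may write
\[
(g_1-g_2)(g_1+g_2)=g_1^2-g_2^2=\theta_2-\theta_1\ne0
\]
with $g_1,g_2$ nonconstant and $\deg g_1=\deg g_2=\tfrac12\deg A$. Both factors on the left are units, hence constants. Then $g_1=\tfrac12\bigl((g_1-g_2)+(g_1+g_2)\bigr)$ is constant, a contradiction. So $F$ has at most one distinct root of odd multiplicity. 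This is the same difference-of-squares device used repeatedly above, for instance in \Cref{lem:common-center-family}.

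For the second statement, suppose $\deg F\ge3$ is odd and $F$ has no repeated root. Then all $\deg F\ge3$ roots are simple, so they have odd multiplicity, contradicting the first part. Hence $F$ has a repeated root, which is equivalent to $\disc(F)=0$ by the standard characterization of the discriminant. The only point needing care is the coprimality and the multiplicity bookkeeping; I expect no real obstacle beyond stating it cleanly.
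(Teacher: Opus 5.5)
Your proof is correct and follows essentially the same route as the paper. The paper factors $F$ over $\overline K$ and uses the pairwise coprimality of the $A-\theta_j$ to make $A-\theta_j$ a square whenever $e_j$ is odd; it then excludes two such roots by the difference-of-squares identity and derives the discriminant statement from squarefreeness. The only cosmetic difference is that you count multiplicities at individual roots $x_0$ where the paper counts multiplicities of irreducible factors, and over $\overline K$ these are the same thing.
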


\begin{proof}
Factor \(F\) over \(\overline K\) as
\[
 F(Y)=\gamma\prod_{i=1}^{s}(Y-\rho_i)^{e_i},
 \qquad \gamma
=\lc(F)
\in K^{*},
\]
where \(\rho_1,\ldots,\rho_s\) are pairwise distinct and \(e_i\ge1\).
Then $F(A)=\gamma\prod_{i=1}^{s}(A-\rho_i)^{e_i}.$ For \(i\ne j\), the polynomials \(A-\rho_i\) and \(A-\rho_j\) are
relatively prime in \(\overline K[X]\), because their difference is the
nonzero constant \(\rho_j-\rho_i\).

Suppose that \(e_i\) is odd. Since the polynomials
\(A-\rho_1,\ldots,A-\rho_s\) are pairwise relatively prime and \(F(A)\)
is a square, every irreducible factor of \(A-\rho_i\) occurs in
\(A-\rho_i\) with even multiplicity. Hence,
\[
 A-\rho_i=\alpha_i B_i^2,
\]
for some \(\alpha_i\in\overline K^*\) and
\(B_i\in\overline K[X]\). Since \(\overline K\) is algebraically closed,
we may absorb a square root of \(\alpha_i\) into \(B_i\) and write
\[
 A-\rho_i=C_i^2.
\]
 If two distinct roots \(\rho_i\) and \(\rho_j\) both had odd
multiplicity, then
$A-\rho_i=C_i^2$ and $A-\rho_j=C_j^2$
for some \(C_i,C_j\in\overline K[X]\). Subtracting gives
\begin{equation}\label{eq:difference-two-squares}
 (C_i-C_j)(C_i+C_j)=\rho_j-\rho_i\in\overline K^*.
\end{equation}
Both factors on the left are therefore constant. Since
\(\operatorname{char}K=0\), it follows that \(C_i\) and \(C_j\) are
constant, and hence \(A\) is constant, a contradiction. Thus at most one distinct root
of \(F\) has odd multiplicity.

If \(\deg F\ge3\) is odd and \(F\) were squarefree, all of its roots
would have multiplicity one, so at least three distinct roots would have
odd multiplicity. This contradicts the first part. Hence, \(F\) has a
repeated root and \eqref{eq:disc-zero} follows.
\end{proof}

\begin{remark}\label{rem:odd-composition-structure}
Under the hypotheses of \Cref{lem:odd-degree-composition}, if
\(F\) has odd degree, then over \(\overline K\) one necessarily has
\[
 F(Y)=\gamma(Y-\rho)G(Y)^2,
\]
for suitable \(\gamma\in\overline K^*\), \(\rho\in\overline K\), and
\(G\in\overline K[Y]\).
\end{remark}

\begin{corollary}\label{cor:odd-composition}
Let \(K\in\{\Q,\R\}\). Let \(A\in K[X]\) be nonconstant and let
\(F\in K[Y]\) be squarefree of odd degree at least three. Then
\(F(A)\) is not a square in \(K[X]\).
\end{corollary}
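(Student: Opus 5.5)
This is a direct consequence of \Cref{lem:odd-degree-composition}, which I would argue by contradiction. Suppose that \(F(A)\) is a square in \(K[X]\). The hypotheses of the lemma then hold: \(K\in\{\Q,\R\}\), \(A\) is nonconstant, and \(F\ne0\), since \(F\) has degree at least three. Its second assertion applies because \(\deg F\ge3\) is odd, so \(F\) has a repeated root over \(\overline K\), that is, \(\disc(F)=0\).

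It remains to see that this contradicts the squarefreeness of \(F\). Because \(\operatorname{char}K=0\), the field \(K\) is perfect. Hence a polynomial that is squarefree in \(K[Y]\) has no repeated roots over \(\overline K\): its distinct irreducible factors are separable and pairwise coprime, so they share no roots. Equivalently, \(\gcd(F,F')=1\), and a gcd computed over \(K\) does not change over \(\overline K\); therefore \(\disc(F)\ne0\). If squarefree is instead read as squarefree over \(\overline K\), the contradiction is immediate.

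In both readings we reach a contradiction, so \(F(A)\) is not a square in \(K[X]\). There is no real obstacle. The only step needing a remark is that squarefreeness over \(K\) carries over to \(\overline K\), and this holds in characteristic zero.
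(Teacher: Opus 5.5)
Your proposal is correct and follows the paper's own proof: assume \(F(A)\) is a square, apply \Cref{lem:odd-degree-composition} to get a repeated root of \(F\) over \(\overline K\), and contradict squarefreeness. Your added remark, that in characteristic zero squarefreeness over \(K\) implies no repeated roots over \(\overline K\), makes explicit a step the paper leaves implicit.
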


\begin{proof}
If \(F(A)\) were a square, \Cref{lem:odd-degree-composition} would imply that \(F\) has
a repeated root over \(\overline K\), contrary to squarefreeness.
\end{proof}

\begin{remark}\label{rem:linear-exception}The hypothesis \(\deg F\ge3\) in \Cref{cor:odd-composition} is essential. For example,
\(F(Y)=Y\) and \(A(X)=X^2\) give \(F(A)=X^2\), although \(F\) has no
repeated root.
\end{remark}

We now apply the composition obstruction to equal degrees
\((2n,2n,2n)\), with \(n\ge1\). Setting \(m=n\) in
\Cref{prop:two-largest-split} gives
\[
 (\deg P,\deg Q,\deg S,\deg T)
 =(i,2n-i,2n-i,i),\qquad 0\le i\le n.
\]
The profile $i=0$ in \Cref{prop:two-largest-split} has $P$
constant and hence $a\mid r+1$. More generally, an allocation is
unsplit at an entry if that entry divides one of the two sign
factors. The following proposition excludes either possibility
over $\Q$ when the entry degrees are equal.

\begin{proposition}\label{prop:unsplit}
Let \(n\) be a positive integer. Suppose that \(a,b,c\in\Q[X]\) satisfy \eqref{eq:four-squares} and
\[
 \deg a=\deg b=\deg c=2n.
\]
Then \(a\nmid r-1\) and \(a\nmid r+1\), and the analogous
conclusion holds for either entry in each of the three pairwise square
identities. Equivalently, the complete irreducible-power factors of
an entry do not all occur in the same sign factor of a pairwise square root.
\end{proposition}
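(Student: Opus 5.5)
The plan is to reduce an unsplit allocation to the common-center shape of \Cref{lem:common-center-family}, with the quadratic $a$ replaced by a polynomial of degree $2n$, and then to use the composition obstruction \Cref{lem:odd-degree-composition} together with the irreducibility of $G$ from \Cref{lem:G-roots}.

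\emph{Reduction.} The four-square conditions are symmetric in the entries, and all degrees equal $2n$. Hence $\deg x+\deg y>\deg z$ for every labeling, so \Cref{lem:regularity} applies to any pair. It therefore suffices to treat the entry $a$ in the pair $(a,b)$, with the sign of $r$ fixed so that $c=a+b+2r$. Suppose $a\mid r+\varepsilon$ for some $\varepsilon\in\{\pm1\}$. Since $\deg r=2n=\deg a$, we get $r+\varepsilon=\alpha a$ with $\alpha\in\Q^*$. Then
\[
 ab=r^2-1=(r+\varepsilon)(r-\varepsilon)=\alpha a(\alpha a-2\varepsilon),
\]
so $b=\alpha^2a-2\varepsilon\alpha$. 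Regularity then gives
\[
 c=a+b+2r=(1+\varepsilon\alpha)^2a-2\varepsilon(1+\varepsilon\alpha).
\]
Set $p=-\varepsilon\alpha$ and $q=-\varepsilon(1+\varepsilon\alpha)$. Then $b=p^2a+2p$, $c=q^2a+2q$, $p,q\in\Q^*$ and $(p-q)^2=1$. Here $q\neq0$ because $c$ is nonconstant. These are exactly the relations \eqref{eq:common-bc-initial} and \eqref{eq:p-minus-q}.

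\emph{Composition step.} As in \eqref{eq:abc-common-expanded}, we have $abc+1=F(a)$ with
\[
 F(Y)=\pi^2Y^3+2\pi\sigma Y^2+4\pi Y+1,\qquad \pi=pq,\ \sigma=p+q,\ \sigma^2-4\pi=1.
\]
Since $F(a)=u^2$ and $a$ is nonconstant, \Cref{lem:odd-degree-composition} and \Cref{rem:odd-composition-structure} force $F=\pi^2(Y-\rho)(Y-\theta)^2$. A double root of a rational cubic is a root of $\gcd(F,F')$, so $\theta\in\Q$, and then $\rho\in\Q$ as well. Because $a-\rho$ and $a-\theta$ are coprime and $\pi^2(a-\rho)(a-\theta)^2$ is a square in $\Q[X]$, the factor $a-\rho$ must be a square: $a-\rho=D^2$ with $D\in\Q[X]$, after absorbing the sign via positivity of $\lc(a)$ from \Cref{lem:degrees}.

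\emph{Contradiction.} Now $a=D^2+\lambda$ with $\lambda=\rho$, and $F(a)$ equals $u^2$ with $u=D(\pi a+k)$ for some $k\in\Q$. This is the identity $\pi^2(Y-\rho)(Y-\theta)^2$ rewritten, with $k$ determined by $\theta$. Comparing coefficients in $a$ yields \eqref{eq:common-coeff-1}--\eqref{eq:common-coeff-3} verbatim, with $D$ playing the role of $X$; only the fact that $a$ is nonconstant was used there. Combining these with $\sigma^2-4\pi=1$ gives $G(k)=0$ for a rational $k$, which contradicts \Cref{lem:G-roots}.

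\emph{Expected obstacle.} The main difficulty is making the passage from ``$F(a)$ is a square'' to ``$a-\rho$ is a rational square'' clean over $\Q$. This requires rationality of $\rho$ and $\theta$ and a sign and leading-coefficient argument. As a fallback, one can compute $\disc_Y F$ as a polynomial in $\alpha$ and check directly that it has no rational root. One must also check that the case $a\mid r-1$ is genuinely covered by the sign $\varepsilon$ without altering the regularity sign; the computation above handles both signs uniformly.
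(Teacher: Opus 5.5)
Your proof is correct, but its decisive step differs from the paper's. Both arguments reduce to $abc+1=F(a)$ for a cubic $F$ and rest on \Cref{lem:odd-degree-composition}. They differ in how they certify that $F$ has no repeated root for rational parameters.

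The paper normalizes to $b=ak^2+2k$, $c=a(k+1)^2+2(k+1)$. It then computes $\disc_Y(F_k)=k^3(k+1)^3h(k)$ with a new cubic $h$ and proves $h$ irreducible by reduction modulo $3$. This is precisely your fallback. You instead observe that a repeated root of a rational cubic is rational, so $F(Y)=(Y-\lambda)(\pi Y+k)^2$ with $\lambda,k\in\Q$. Comparing coefficients then reproduces \eqref{eq:common-coeff-1}--\eqref{eq:common-coeff-3}, and the contradiction comes from \Cref{lem:G-roots} with no new discriminant or irreducibility computation.

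Your route also gives more. Run over $\R$, it shows that every unsplit equal-degree triple is \eqref{eq:common-triple} with $X$ replaced by a polynomial $D$. Hence the construction in \Cref{rem:real-unsplit} is exhaustive. The paper's route is a more self-contained computation, and its factor $h$ directly supplies the parameters used in that remark.

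You need two repairs.

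\emph{The formula for $c$ when $\varepsilon=-1$.} Your formula is wrong in this case. From $r=\alpha a-\varepsilon$ one gets $c=(1+\alpha)^2a-2\varepsilon(1+\alpha)$. So you must take $q=-\varepsilon(1+\alpha)$, which gives $p-q=\varepsilon$. With your $q$, one gets $p-q=2\alpha-1$ when $\varepsilon=-1$, so $(p-q)^2=1$ would fail.

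\emph{The anticipated obstacle.} It disappears, because you do not need $D$ at all. Comparing coefficients in $F(Y)=\pi^2(Y-\rho)(Y-\theta)^2=(Y-\rho)(\pi Y-\pi\theta)^2$ as polynomials in $Y$ already gives the three equations, with $\lambda=\rho$ and $k=-\pi\theta$. This also covers the triple-root case $\rho=\theta$, where your coprimality claim fails. If you do want $D$, note that $(a-\theta)^2\mid u^2$ in $\Q[X]$ forces $(a-\theta)\mid u$. Then $a-\rho=\bigl(u/(\pi(a-\theta))\bigr)^2$ directly, with no sign or leading-coefficient argument.
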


\begin{proof}
Suppose, on the contrary, that \(a\mid r-1\) or \(a\mid r+1\).
Since \(\deg a=\deg b=\deg c=2n>0\), \Cref{lem:regularity} implies that
the triple is regular. If initially \(a\mid r+1\), replace
\(r\) by \(-r\); this preserves \(r^2=ab+1\) and gives
\(a\mid r-1\). We retain this divisibility choice of sign and
therefore allow either regularity sign:
\begin{equation}\label{eq:regularity-epsilon}
 c=a+b+2\varepsilon r,
 \qquad \varepsilon\in\{1,-1\}.
\end{equation}
Write
\begin{equation}\label{eq:r-ak-plus-one}
 r=ak+1.
\end{equation}
Since \(\deg r=2n=\deg a\), the quotient \(k\) is a rational constant.
The identity \(ab+1=r^2\) and \eqref{eq:regularity-epsilon} give
\begin{equation}\label{eq:epsilon-family}
 b=ak^2+2k,
 \qquad
 c=a(k+\varepsilon)^2+2(k+\varepsilon).
\end{equation}
If \(\varepsilon=-1\), interchange \(b,c\), which preserves
all four square conditions, and put \(j=k-1\). This is a
relabeling of the entries and the constant parameter, with no
change of variable. Then \eqref{eq:epsilon-family} becomes
$b=aj^2+2j,$ $c=a(j+1)^2+2(j+1).$
Thus, after this relabeling and renaming \(j\) as \(k\), both sign
cases reduce to
\begin{equation}\label{eq:consecutive-family}
 b=ak^2+2k,
 \qquad
 c=a(k+1)^2+2(k+1).
\end{equation}
Here \(k\ne0,-1\), because these values would make \(b=0\) or
\(c=0\), respectively. It follows that
\begin{equation}\label{eq:Fk-composition}
 abc+1=F_k(a),
\end{equation}
where
\begin{equation}\label{eq:Fk-definition}
 F_k(Y)=Y(Yk^2+2k)(Y(k+1)^2+2(k+1))+1.
\end{equation}
Expanding the standard cubic discriminant gives
\begin{equation}\label{eq:Fk-discriminant}
 \disc_Y(F_k)=k^3(k+1)^3h(k),
 \qquad h(k)=32k^3+85k^2-11k-32.
\end{equation}

For \(k\in\Q\setminus\{0,-1\}\), the first two factors in
\eqref{eq:Fk-discriminant} are nonzero. Moreover,
\[
 h(k)\equiv2k^3+k^2+k+1\pmod3.
\]
For \(k=0,1,2\), this is congruent to \(1,2,2\), respectively, so $h$ has no root in
\(\mathbb F_3\). Being cubic, it is irreducible over \(\mathbb F_3\). The polynomial \(h\) is primitive and its leading coefficient is
nonzero modulo \(3\), so the reduction criterion together
with Gauss's lemma \cite{Lang} implies that \(h\) is
irreducible over \(\Q\). In particular, \(h(k)\ne0\) for every
\(k\in\Q\). Hence, \(F_k\) is squarefree. But
\eqref{eq:four-squares} and \eqref{eq:Fk-composition} give
\(F_k(a)=u^2\), contradicting \Cref{cor:odd-composition}.
\end{proof}

\begin{corollary}\label{cor:equal-reducible}
Under the hypotheses of \Cref{prop:unsplit}, each of \(a,b,c\)
has at least two distinct irreducible factors over \(\Q\).
In particular, every entry is reducible in \(\Q[X]\).
\end{corollary}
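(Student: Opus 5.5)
The plan is to derive the claim directly from \Cref{prop:unsplit}, using the coprimality of the sign factors \(r-1\) and \(r+1\). By symmetry it suffices to treat the entry \(a\), working with the pair \((a,b)\) and the square root \(r\) of \(ab+1\). The same argument, applied to the pairs \((b,a)\) or \((b,c)\) and \((c,a)\), covers \(b\) and \(c\); \Cref{prop:unsplit} explicitly gives the analogous conclusion for either entry in each pairwise identity.

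\textbf{Step 1: coprimality and allocation.} As in the proof of \Cref{prop:two-largest-split}, \((r+1)-(r-1)=2\) is a unit, so \(r-1\) and \(r+1\) are relatively prime in \(\Q[X]\). Since \((r-1)(r+1)=ab\), every complete irreducible-power factor \(\pi^e\parallel a\) satisfies \(\pi^e\mid r-1\) or \(\pi^e\mid r+1\). Indeed, \(\pi\) divides exactly one of the two sign factors, and the full power \(\pi^e\) then divides that factor because \(\pi\nmid\) the other.

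\textbf{Step 2: contradiction if \(a\) has a single irreducible factor.} Suppose \(a\) has at most one distinct irreducible factor over \(\Q\). Since \(a\) is nonconstant, \(a=\gamma\pi^e\) with \(\gamma\in\Q^*\) and \(\pi\) irreducible. By Step 1, \(\pi^e\) divides one of \(r\pm1\), hence so does \(a\). This contradicts \Cref{prop:unsplit}, which states \(a\nmid r-1\) and \(a\nmid r+1\). Therefore \(a\) has at least two distinct irreducible factors, and in particular \(a\) is reducible.

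I expect no serious obstacle. The only point needing care is the full-power divisibility in Step 1, since \(\pi^e\) could a priori split between the two sign factors; coprimality of \(r-1\) and \(r+1\) rules this out.
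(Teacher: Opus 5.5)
Your proposal is correct and matches the paper's proof: the coprimality of \(r-1\) and \(r+1\) places each complete irreducible-power factor of \(a\) in exactly one sign factor, \Cref{prop:unsplit} forbids them all from landing in the same one, and symmetry handles \(b\) and \(c\). Your explicit check that the full power \(\pi^e\), not just \(\pi\), lands in a single sign factor is a detail the paper leaves implicit.
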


\begin{proof}
Since \((r-1)(r+1)=ab\) and \(\gcd(r-1,r+1)=1\), every
complete irreducible-power factor of \(a\) divides exactly one of
\(r-1\) and \(r+1\). By \Cref{prop:unsplit}, these factors cannot
all divide the same one. Thus \(a\) has at least two distinct
irreducible factors. By symmetry, the same holds for \(b\) and \(c\).
\end{proof}

\begin{remark}\label{rem:real-unsplit}
The analogue of \Cref{prop:unsplit} is false over \(\R[X]\). The cubic
$ h(k)=32k^3+85k^2-11k-32$
has three distinct real roots, since
$\disc(h)=68\,574\,961>0.$
Moreover, \(h(0)\ne0\) and \(h(-1)\ne0\), so every root \(k\) of \(h\)
satisfies \(k\ne0,-1\).

Let \(k\) be such a root. By \eqref{eq:Fk-discriminant}, the cubic
\(F_k\) has a repeated root and hence can be written as
\[
 F_k(Y)=k^2(k+1)^2(Y-\rho)^2(Y-\sigma)
\]
for suitable \(\rho,\sigma\in\R\). Given any nonconstant \(P\in\R[X]\), put
\[
 a=\sigma+P^2,\qquad
 b=k^2a+2k,\qquad
 c=(k+1)^2a+2(k+1).
\]
Then
\[
 ab+1=(ka+1)^2,\qquad
 ac+1=((k+1)a+1)^2,
\]
\[
 bc+1=\bigl(k(k+1)a+2k+1\bigr)^2,
\]
and
\[
 abc+1=\bigl(k(k+1)(a-\rho)P\bigr)^2.
\]
Since \(k\ne0,-1\) and \(a\) is nonconstant, the polynomials $a$, $b$ and $c$ are
nonzero, nonconstant, and pairwise distinct. Moreover,
$ r=ka+1$
satisfies \(a\mid r-1\). Thus, these triples satisfy the unsplit
condition excluded over \(\Q[X]\) in \Cref{prop:unsplit}.

This is a composition construction: the quadratic triple obtained by
taking \(P=X\) is composed with the polynomial \(P\). It is the same
mechanism used in \cite[Proposition~4.1]{Jurasic2026}.
\end{remark}
\medskip
The following corollary follows from the same composition
construction as in Jurasi\'{c} \cite[Corollary~4.2]{Jurasic2026}.

\begin{corollary}\label{cor:real-equal-degrees}
For every positive integer \(n\), there are infinitely many four-square
polynomial triples over \(\R[X]\) with degree pattern
\((2n,2n,2n)\), with $n\geq 1$.
\end{corollary}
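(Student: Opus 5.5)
The plan is to compose a quadratic four-square triple with polynomials of degree $n$, and to get infinitely many triples by varying the inner polynomial.

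Fix a real quadratic four-square triple $\{a_0,b_0,c_0\}\subset\R[X]$. Either the triple \eqref{eq:common-triple} for a real root $k$ of $G$ (\Cref{lem:G-roots}, \Cref{lem:common-center-family}) or the triple \eqref{eq:Jurasic-triple} will do. Let its square roots be $r_0,s_0,t_0,u_0$. For an arbitrary polynomial $P\in\R[X]$ of degree $n\ge1$, set
\[
 a=a_0(P),\qquad b=b_0(P),\qquad c=c_0(P).
\]
Substituting $X\mapsto P$ is a ring homomorphism $\R[X]\to\R[X]$, so each identity in \eqref{eq:four-squares} is carried over, with square roots $r_0(P),s_0(P),t_0(P),u_0(P)$. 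Since $\deg a_0=2$, we get $\deg a=2n$, and similarly $\deg b=\deg c=2n$. The entries are nonconstant. They are pairwise distinct because the substitution is injective for nonconstant $P$: if $a_0(P)=b_0(P)$, then $(a_0-b_0)(P)=0$, which forces $a_0=b_0$.

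\textbf{Infinitude.} I would use the family $P_\beta=X^n+\beta X$ with $\beta\in\R$. The entries are $a_\beta=a_0(P_\beta)$, and so on. Write $a_0=\alpha_0(X-x_0)^2+\lambda_0$ with $\alpha_0\neq0$. Then
\[
 a_\beta=\alpha_0(X^n+\beta X-x_0)^2+\lambda_0 .
\]
For $n\ge2$, the coefficient of $X^{n+1}$ in $a_\beta$ is $2\alpha_0\beta$. Distinct values of $\beta$ therefore give distinct polynomials $a_\beta$, so the triples are distinct as sets of polynomials. For $n=1$, use $P_\beta=\beta X$ with $\beta>0$ instead. Then $a_\beta$ has leading coefficient $\alpha_0\beta^2$, and these leading coefficients are pairwise distinct.

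\textbf{Main obstacle.} The statement is ambiguous: the triples must be infinitely many in the literal sense, not infinitely many equivalence classes. In the quadratic case they are all equivalent by \Cref{thm:quadratic-classification}. So the only point needing care is that the sets $\{a_\beta,b_\beta,c_\beta\}$ really are different for different $\beta$. Showing that $a_\beta$ varies is not quite enough, since $a_\beta$ could coincide with another entry of a different triple.

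To handle this I would compare the unordered multiset of leading-coefficient data or the $X^{n+1}$-coefficients of all three entries. For $b_0=\alpha_1(X-x_1)^2+\lambda_1$, the $X^{n+1}$-coefficient of $b_\beta$ is $2\alpha_1\beta$, and similarly for $c_\beta$. Since the leading coefficients $\alpha_0,\alpha_1,\alpha_2$ are fixed and positive, the set of $X^{n+1}$-coefficients is $2\beta\{\alpha_0,\alpha_1,\alpha_2\}$. This set determines $\beta$, because its sum $2\beta(\alpha_0+\alpha_1+\alpha_2)$ is injective in $\beta$. This gives infinitely many distinct triples with degree pattern $(2n,2n,2n)$.
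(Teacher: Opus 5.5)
Your proposal is correct and follows essentially the paper's route. The paper also composes the real quadratic triple, in the normalization $a=\sigma+P^2$, $b=k^2a+2k$, $c=(k+1)^2a+2(k+1)$ from \Cref{rem:real-unsplit}, with a degree-$n$ family, namely $P_t=X^n+t$, which works uniformly for all $n\ge1$ without your separate $n=1$ case. Your extra care about the sets being distinct is valid but not needed for infinitude: finitely many triples contain only finitely many polynomials, so infinitely many distinct $a_\beta$ already force infinitely many distinct triples.
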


\begin{proof}
In \Cref{rem:real-unsplit}, choose any polynomial \(P\in\R[X]\) of
degree \(n\). Then \(\deg a=\deg b=\deg c=2n\). For instance, taking
$P_t=X^n+t,$ $t\in\R$,
gives infinitely many distinct polynomials \(a=\sigma+P_t^2\), and
hence infinitely many distinct triples.
\end{proof}

Returning to triples over \(\Q[X]\), we now specialize the general
factor-allocation theorem to the equal-degree pattern \((2n,2n,2n)\), with $n\geq 1$.

\begin{proposition}\label{prop:equal-degree-split}
Let \(n\) be a positive integer. After a permutation of the entries and
a choice of the sign of \(r\), any four-square polynomial triple over \(\Q[X]\) with degree
pattern \((2n,2n,2n)\), with $n\geq 1$, gives nonconstant polynomials
\(P,Q,S,T\in\Q[X]\) such that
\begin{equation}\label{eq:det-system}
 r-1=PS,
 \qquad r+1=QT,
 \qquad a=PQ,
 \qquad b=ST,
 \qquad QT-PS=2,
\end{equation}
\begin{equation}\label{eq:c-factor}
 c=(P+T)(Q+S),
\end{equation}
and
\begin{equation}\label{eq:quartic-fourth}
 PQST(P+T)(Q+S)+1=U^2
\end{equation}
for some \(U\in\Q[X]\). Up to interchanging \(a\) and \(b\), there is
an integer \(i\) with \(1\le i\le n\) such that
\begin{equation}\label{eq:general-factor-degrees}
 (\deg P,\deg Q,\deg S,\deg T)
 =(i,2n-i,2n-i,i).
\end{equation}
Thus the split-factor system has \(n\) candidate degree profiles up to
this symmetry.

Conversely, suppose nonconstant \(P,Q,S,T,U\in\Q[X]\) satisfy
\(QT-PS=2\) and \eqref{eq:quartic-fourth}. If
\[
 a=PQ,
 \qquad b=ST,
 \qquad c=(P+T)(Q+S)
\]
are pairwise distinct and \(c\) is nonzero and nonconstant, then they
form a four-square polynomial triple.
\end{proposition}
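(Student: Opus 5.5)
The plan is to specialize \Cref{prop:two-largest-split} with $m=n$ and then use \Cref{prop:unsplit} to exclude the boundary profile $i=0$; after that, the converse is a direct verification.

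\textbf{Forward direction.} Since all three degrees equal $2n$, \Cref{prop:two-largest-split} applies with $m=n$. After possibly interchanging $b$ and $c$ and choosing the sign of $r$ so that $c=a+b+2r$, it supplies nonzero $P,Q,S,T$ satisfying \eqref{eq:det-system}, \eqref{eq:c-factor} and \eqref{eq:quartic-fourth}. Their degrees are $(i,2n-i,2n-i,i)$ with $0\le i\le n$.

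It remains to rule out $i=0$. In that case $P$ is a nonzero constant. Since $P=\gcd(a,r-1)$ is monic, $P=1$, so $Q=a$ and $a\mid r+1$. This is exactly the unsplit situation excluded by \Cref{prop:unsplit}, which needs the $\Q$ hypothesis. Hence $1\le i\le n$. Then $\deg P=\deg T=i\ge1$ and $\deg Q=\deg S=2n-i\ge n\ge1$, so all four polynomials are nonconstant.

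The phrase ``up to interchanging $a$ and $b$'' needs a short check. The system is symmetric under $(a,b,P,Q,S,T,r)\mapsto(b,a,S,T,P,Q,r)$: indeed $r-1=PS$ and $r+1=QT$ are preserved, $QT-PS=2$ is unchanged, and $c=(P+T)(Q+S)$ becomes $(S+Q)(T+P)$, the same polynomial. So no normalization beyond that already made in \Cref{prop:two-largest-split} is needed. The profiles $i=1,\dots,n$ give the $n$ candidate profiles.

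\textbf{Converse.} Given nonconstant $P,Q,S,T,U$ with $QT-PS=2$, set $r=PS+1$. Then $r=QT-1$ as well, and
\[
ab+1=PQST+1=(r-1)(r+1)+1=r^2 .
\]
Next compute $a+b+2r=PQ+ST+PS+QT=(P+T)(Q+S)=c$, so the triple is regular. Regularity gives the remaining two squares explicitly:
\[
ac+1=(a+r)^2,\qquad bc+1=(b+r)^2 .
\]
To see the first, expand $a(a+b+2r)+1=a^2+2ar+ab+1=a^2+2ar+r^2$; the second is the same computation with $a$ and $b$ exchanged. Finally, $abc+1=U^2$ is exactly \eqref{eq:quartic-fourth}.

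The entries are nonzero: $a$ and $b$ are products of nonconstant, hence nonzero, polynomials, and $c\neq0$ by hypothesis. They are also nonconstant: $\deg a=\deg P+\deg Q\ge 2$, similarly for $b$, and $c$ is nonconstant by hypothesis. Together with the assumed pairwise distinctness, this shows the triple is a four-square polynomial triple.

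The only step needing care is the exclusion of $i=0$. There one must check that the sign convention under which \Cref{prop:unsplit} is stated matches that of \Cref{prop:two-largest-split}. This is harmless, because \Cref{prop:unsplit} excludes both $a\mid r-1$ and $a\mid r+1$.
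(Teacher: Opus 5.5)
Your proposal is correct and follows essentially the same route as the paper. You specialize \Cref{prop:two-largest-split} to $m=n$, exclude $i=0$ because a constant $P$ forces $a\mid r+1$ in contradiction with \Cref{prop:unsplit}, note the symmetry $(P,Q,S,T)\mapsto(S,T,P,Q)$ under $a\leftrightarrow b$ (which replaces $i$ by $2n-i$), and verify the converse via $r=PS+1=QT-1$, $ac+1=(a+r)^2$, $bc+1=(b+r)^2$. Your extra remarks are accurate small additions: the sign convention is irrelevant because \Cref{prop:unsplit} excludes both divisibilities, and in the converse $a,b$ are automatically nonzero and nonconstant.
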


\begin{proof}
Apply \Cref{prop:two-largest-split} with $m=n$.
It gives \eqref{eq:det-system}--\eqref{eq:quartic-fourth} and
\[
(\deg P,\deg Q,\deg S,\deg T)
=(i,2n-i,2n-i,i),\qquad 0\le i\le n.
\]
If $i=0$, then $P$ is constant. The identities $a=PQ$ and
$r+1=QT$ imply $a\mid r+1$, contrary to \Cref{prop:unsplit}.
Therefore $1\le i\le n$, so all four factors are nonconstant.
The symmetry stated in the proposition is also immediate:
interchanging $a,b$ sends
$(P,Q,S,T)$ to $(S,T,P,Q)$, preserves the system, and replaces
$i$ by $2n-i$.

For the converse, set
$ r=PS+1=QT-1.$
Then,
\begin{align*}
 ab+1&=PQST+1=(PS+1)^2=r^2,\\
 ac+1&=PQ(P+T)(Q+S)+1=(a+r)^2,\\
 bc+1&=ST(P+T)(Q+S)+1=(b+r)^2.
\end{align*}
The identity \eqref{eq:quartic-fourth} gives \(abc+1=U^2\). The stated
nonzero, nonconstant, and distinctness assumptions therefore give a
four-square polynomial triple.
\end{proof}

\begin{remark}
\label{rem:low-degree-profiles}
For \(n=1\), \Cref{prop:equal-degree-split} yields only the
factor-degree profile
\[
 (\deg P,\deg Q,\deg S,\deg T)=(1,1,1,1).
\]
This profile is only a necessary degree-theoretic condition.
The corresponding degree-two problem is solved completely in Section \ref{sec3},
where the real triples are classified and the existence of rational
quadratic triples is excluded.
\end{remark}

\subsection{Application to a regular parametrized family}
\label{sec:regular-family}

As a final application, we consider a regular parametrized family
of polynomial Diophantine triples and give   sufficient conditions under which the fourth expression in
the four-square conditions is not a square.

\begin{proposition}\label{prop:regular-family}
Let $p\in\Q[X]$ be nonconstant and
$q\in\Q[X]\setminus\{0,-1\}$. Define
\begin{equation}\label{eq:regular-family}
a=p,\qquad b=q(pq+2),\qquad
c=(q+1)(p(q+1)+2).
\end{equation}
Then $\{a,b,c\}$ is a regular polynomial Diophantine triple of distinct
nonconstant entries. If $N=\deg p$ and $d=\deg q$, its degree pattern is
$(N,N+2d,N+2d).$

If $q$ is constant or $p$ is a polynomial square, then $abc+1$
is not a square in $\Q[X]$.
\end{proposition}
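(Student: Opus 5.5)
First I verify the Diophantine and regularity claims directly. With $r=pq+1$ we have $ab+1=pq(pq+2)+1=(pq+1)^2$. Next, $ac+1=p(q+1)\bigl(p(q+1)+2\bigr)+1=\bigl(p(q+1)+1\bigr)^2$. For the third pair I use the identity
\[
bc+1=\bigl(pq(q+1)+2q+1\bigr)^2,
\]
which I will check by expanding both sides. For regularity, note $a+b+2r=p+pq^2+2q+2pq+2=p(q+1)^2+2(q+1)=c$, so \eqref{eq:regular} holds.

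Degrees and distinctness come next. Since $p$ is nonconstant and $q\neq0,-1$, the leading terms of $b$ and $c$ are $\lc(p)\lc(q)^2X^{N+2d}$ and $\lc(p)\lc(q+1)^2X^{N+2d}$. So neither cancels, and both $b$ and $c$ are nonconstant of degree $N+2d$. Distinctness follows from $c-b=a+2r=p(2q+1)+2$. This is nonzero: if $q=-1/2$ it equals $2$, and otherwise it has degree $N+d>0$. Likewise $b-a=p(q^2-1)+2q$ is nonzero, because either $q^2\neq1$ and it is nonconstant, or $q=1$ and it equals $2$ ($q=-1$ is excluded). The case $c\neq a$ is handled the same way with $q+1$ in place of $q$.

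\emph{Case $q=k$ constant.} Here $k\in\Q\setminus\{0,-1\}$ and we are exactly in the family \eqref{eq:consecutive-family}, so $abc+1=F_k(p)$ with $F_k$ as in \eqref{eq:Fk-definition}. The proof of \Cref{prop:unsplit} shows that $\disc(F_k)=k^3(k+1)^3h(k)\neq0$ for rational $k\neq0,-1$, by the irreducibility of $h$. Hence $F_k$ is a squarefree cubic, and \Cref{cor:odd-composition} with $A=p$ shows that $F_k(p)$ is not a square.

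\emph{Case $p=g^2$ a square.} This is the step I expect to be the main obstacle, since $q$ is now an arbitrary polynomial and no composition structure is visible. I would argue by contradiction, supposing $abc+1=u^2$, and view $abc+1$ as a polynomial in $g$ with coefficients in $\Q[q]$. Expanding,
\[
abc+1=q(q+1)g^2(qg^2+2)\bigl((q+1)g^2+2\bigr)+1 .
\]
The leading part is $q^2(q+1)^2g^6$, so I try $u=q(q+1)g^3+\text{lower terms}$ and look for a square completion. Writing $M=q(q+1)g^3+(2q+1)g$, I compute
\[
abc+1-M^2=4q(q+1)g^2+1-(2q+1)^2g^2=1-g^2 .
\]
Thus $(u-M)(u+M)=1-g^2=1-p$. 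Choosing the sign so that $\deg(u+M)=\deg M=3\deg g+3d$, the left side has degree at least $3\deg g+3d$ unless $u=M$, while the right side has degree $2\deg g$. Since $\deg g\ge1$, this is impossible; the case $u=M$ is also impossible, because it would force $1-p=0$ with $p$ nonconstant. I still need to double-check the identity $abc+1=M^2+1-p$ by expansion, but it appears to hold for general $p$ as well: with $M=q(q+1)p\sqrt{p}+(2q+1)\sqrt p$ we get $M^2=p\bigl(q(q+1)p+2q+1\bigr)^2$, and the same computation gives $abc+1=p\bigl(q(q+1)p+2q+1\bigr)^2+1-p$. When $p$ is a square this becomes a difference-of-squares argument with a right-hand side of too small a degree.
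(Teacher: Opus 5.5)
Your proof is correct. The verification of the square identities and regularity, the degree and distinctness checks, and the case $p=g^2$ all follow the paper's proof. Your $M=q(q+1)g^3+(2q+1)g$ is exactly the paper's $wH$. The identity $abc+1=pH^2+1-p$ that you wanted to double-check is the one the paper uses; it follows immediately from $bc=H^2-1$.

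The one real difference is the constant case $q=k$.

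\begin{itemize}
\item The paper argues by contradiction. It uses \Cref{lem:degrees} to obtain the pattern $(2n,2n,2n)$. It then notes that $r-1=pq=aq$, so $a\mid r-1$, which contradicts \Cref{prop:unsplit}.
\item You observe that the triple is already in the normal form \eqref{eq:consecutive-family} with $a=p$. You then apply \eqref{eq:Fk-discriminant}, the irreducibility of $h$ over $\Q$, and \Cref{cor:odd-composition} directly.
\end{itemize}

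Your argument is exactly the core of the proof of \Cref{prop:unsplit}, so the underlying mechanism is the same. Your version skips the parity lemma and the regularity/divisibility reduction, which that proof needs only to reach the normal form. The paper's version is shorter on the page because it uses the proposition as a black box.

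Two small points:
\begin{itemize}
\item $\deg M=3\deg g+2d$, not $3\deg g+3d$, since $\deg\bigl(q(q+1)\bigr)=2d$. This is harmless, because it still exceeds $2\deg g$.
\item The sign choice giving $\deg(u+M)=\deg M$ is available because $u^2-M^2=1-p$ has degree $2\deg g<2\deg M$. This forces $\deg u=\deg M$ and $\lc(u)=\pm\lc(M)$.
\end{itemize}
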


\begin{proof}
Set
\[
r=pq+1,\qquad H=pq(q+1)+2q+1.
\]
Direct multiplication gives
\begin{equation}\label{eq:regular-three-squares}
ab+1=(pq+1)^2,\qquad
ac+1=(p(q+1)+1)^2,\qquad bc+1=H^2,
\end{equation}
and $c=a+b+2r$. Also, since $a=p$ and $bc=H^2-1$,
\begin{equation}\label{eq:regular-fourth}
 abc+1=pH^2+1-p.
\end{equation}

Since $p$ is nonconstant and $q\ne0,-1$, all three entries are
nonzero and nonconstant. Their differences are
\[
b-a=(q^2-1)p+2q,\qquad
c-a=q(q+2)p+2(q+1),\qquad
c-b=(2q+1)p+2.
\]
For nonconstant $q$, comparison of degrees shows that none is zero.
For constant $q$, a vanishing coefficient of $p$ can occur only
at $q=1$ in the first difference, $q=-2$ in the second, and
$q=-1/2$ in the third, after excluding $q=0,-1$.
The corresponding differences are $2,-2,2$, respectively.
Thus the entries are pairwise distinct.
Comparison of leading terms gives the displayed degree pattern,
both for $d=0$ and for $d>0$.

If $q$ is constant and $abc+1$ were a square,
\Cref{lem:degrees} would give the degree pattern $(2n,2n,2n)$
for some $n\ge1$. But $r-1=pq=aq$, so $a\mid r-1$,
contrary to \Cref{prop:unsplit}.

Now suppose $p=w^2$ for a nonconstant $w\in\Q[X]$.
If $abc+1=u^2$, then \eqref{eq:regular-fourth} gives
\[
 (u-wH)(u+wH)=1-w^2.
\]
Here $\deg H=N+2d$, so $\deg(wH)=3N/2+2d>N$.
The squares of $u$ and $wH$ have the same leading term.
Choosing the sign of $u$ so that $\lc(u)=\lc(wH)$ therefore
gives $\deg(u+wH)=3N/2+2d>N$, whereas the nonzero polynomial
$1-w^2$ has degree $N$. This contradicts the factorization.
\end{proof}

\section*{Acknowledgements}

The authors were supported by the Croatian Science Foundation under the project No. IP-2022-10-5008. A. J. was also supported by the European Union -- NextGenerationEU,
project number uniri-iz-25-62-ALGEBRA. During the preparation of this manuscript, the authors used ChatGPT-5.6 to investigate generalizations, enhance text readability, and assist with LaTeX formatting. After using this tool, the authors reviewed and edited the content and take full responsibility for the content of the publication.

\normalsize


\begin{thebibliography}{99}


\bibitem{Dujella2001}
A.~Dujella,
\newblock \emph{Diophantine \(m\)-tuples and elliptic curves},
\newblock J. Th\'eor. Nombres Bordeaux \textbf{13} (2001), no.~1,
111--124,
\newblock \url{https://doi.org/10.5802/jtnb.308}.

\bibitem{DujellaBook2024}
A.~Dujella,
\newblock \emph{Diophantine \(m\)-tuples and Elliptic Curves},
\newblock Developments in Mathematics, vol.~79,
Springer, Cham, 2024,
\newblock \url{https://doi.org/10.1007/978-3-031-56724-7}.

\bibitem{DujellaFuchs2003}
A.~Dujella and C.~Fuchs,
\newblock \emph{A polynomial variant of a problem of Diophantus and Euler},
\newblock Rocky Mountain J. Math. \textbf{33} (2003), no.~3,
797--811,
\newblock \url{https://doi.org/10.1216/rmjm/1181069929}.

\bibitem{DujellaFuchs2004}
A.~Dujella and C.~Fuchs,
\newblock \emph{Complete solution of the polynomial version of a problem
of Diophantus},
\newblock J. Number Theory \textbf{106} (2004), no.~2, 326--344,
\newblock \url{https://doi.org/10.1016/j.jnt.2003.12.011}.

\bibitem{DujellaFuchsTichy2002}
A.~Dujella, C.~Fuchs, and R.~F.~Tichy,
\newblock \emph{Diophantine \(m\)-tuples for linear polynomials},
\newblock Period. Math. Hungar. \textbf{45} (2002), no.~1--2, 21--33,
\newblock \url{https://doi.org/10.1023/A:1022389711839}.

\bibitem{DujellaFuchsWalsh2006}
A.~Dujella, C.~Fuchs, and P.~G.~Walsh,
\newblock \emph{Diophantine \(m\)-tuples for linear polynomials.
II\@. Equal degrees},
\newblock J. Number Theory \textbf{120} (2006), no.~2, 213--228,
\newblock \url{https://doi.org/10.1016/j.jnt.2005.12.005}.

\bibitem{DujellaJurasic2010}
A.~Dujella and A.~Jurasi\'c,
\newblock \emph{On the size of sets in a polynomial variant of a problem
of Diophantus},
\newblock Int. J. Number Theory \textbf{6} (2010), no.~7, 1449--1471,
\newblock \url{https://doi.org/10.1142/S1793042110003575}.

\bibitem{DujellaJurasic2011}
A.~Dujella and A.~Jurasi\'c,
\newblock \emph{Some Diophantine triples and quadruples for quadratic
polynomials},
\newblock J. Comb. Number Theory \textbf{3} (2011), no.~2, 123--141.

\bibitem{DujellaSzalay}
A.~Dujella and L.~Szalay,
\newblock \emph{Four squares from three numbers},
\newblock arXiv:2506.14013 [math.NT], 2025,
\newblock \url{https://arxiv.org/abs/2506.14013}.


\bibitem{Jones1976}
B.~W.~Jones,
\newblock \emph{A variation on a problem of Davenport and Diophantus},
\newblock Q. J. Math. \textbf{27} (1976), no.~3, 349--353,
\newblock \url{https://doi.org/10.1093/qmath/27.3.349}.

\bibitem{Jurasic2011}
A.~Jurasi\'c,
\newblock \emph{Diophantine \(m\)-tuples for quadratic polynomials},
\newblock Glas. Mat. Ser. III \textbf{46(66)} (2011), no.~2, 283--309,
\newblock \url{https://doi.org/10.3336/gm.46.2.02}.

\bibitem{Jurasic2026}
A.~Jurasi\'c,
\newblock \emph{Four squares from three real polynomials},
\newblock Rad HAZU, to appear. 
\newblock \url{https://arxiv.org/abs/2607.06586}.

\bibitem{Lang}
S.~Lang,
\newblock \emph{Algebra},
\newblock revised third edition,
Graduate Texts in Mathematics, vol.~211,
Springer-Verlag, New York, 2002,
\newblock \url{https://doi.org/10.1007/978-1-4613-0041-0}.

\bibitem{Mathematica}
Wolfram Research, Inc.,
\emph{Mathematica, Version~14.3},
Champaign, IL, 2025.
\url{https://www.wolfram.com/mathematica}.



\end{thebibliography}
\end{document}